\newtheorem{theorem}{Theorem}
\newtheorem{proposition}[theorem]{Proposition}
\newtheorem{corollary}[theorem]{Corollary}
\theoremstyle{remark}
\newtheorem{definition}[theorem]{Definition}
\newcommand{\ignore}[1]{}
\begin{document}

\title{On Lorentz spacetimes of constant curvature}
\author{Fran\c{c}ois Gu\'eritaud}
\address{CNRS and Universit\'e Lille 1, Laboratoire Paul Painlev\'e, 59655 Villeneuve d'Ascq Cedex, France}
\email{\newline francois.gueritaud@math.univ-lille1.fr}
\thanks{This work was partially supported by the Agence Nationale de la Recherche under the grants DiscGroup (ANR-11-BS01-013) and ETTT (ANR-09-BLAN-0116-01), and through the Labex CEMPI (ANR-11-LABX-0007-01).}
\date{July 2013}

\begin{abstract}
We describe in parallel the Lorentzian homogeneous spaces $G=\mathrm{PSL}(2,\mathbb{R})$ and $\mathfrak{g}=\mathfrak{psl}(2,\mathbb{R})$, and review some recent results relating the geometry of their quotients by discrete groups.
\end{abstract}

\maketitle

\section{Introduction}

Let $G:=\mathrm{PSL}(2,\mathbb{R})$ be the group of projective $2\times 2$ matrices with positive determinant. Identifying the hyperbolic plane $\mathbb{H}^2$ with the space of complex numbers~$z$ that have positive imaginary part, we can view $G$ as the group of orientation-preserving isometries of $\mathbb{H}^2$, acting by M\"obius transformations
$$\left [ \begin{array}{cc} a & b \\ c & d \end{array} \right ] \cdot z = \frac{az+b}{cz+d}~.$$
Let $\mathfrak{g}:=\mathfrak{psl}(2,\mathbb{R})$ be the Lie algebra of $G$. We can identify $\mathfrak{g}$ with the vector space of traceless matrices (a copy of $\mathbb{R}^3$), and $G$ with a certain open subset of $3$-dimensional projective space $\mathbb{P}^3\mathbb{R}$. Since the determinant of a $2\times 2$ matrix is a quadratic form in its entries, $G$ is bounded in $\mathbb{P}^3\mathbb{R}$ by a quadric $\partial_\infty G$ (the space of rank-1 projective matrices, defined by $ad=bc$). See Figure \ref{GL2}.

\begin{figure}[h!] \centering
\includegraphics[width=12.5cm]{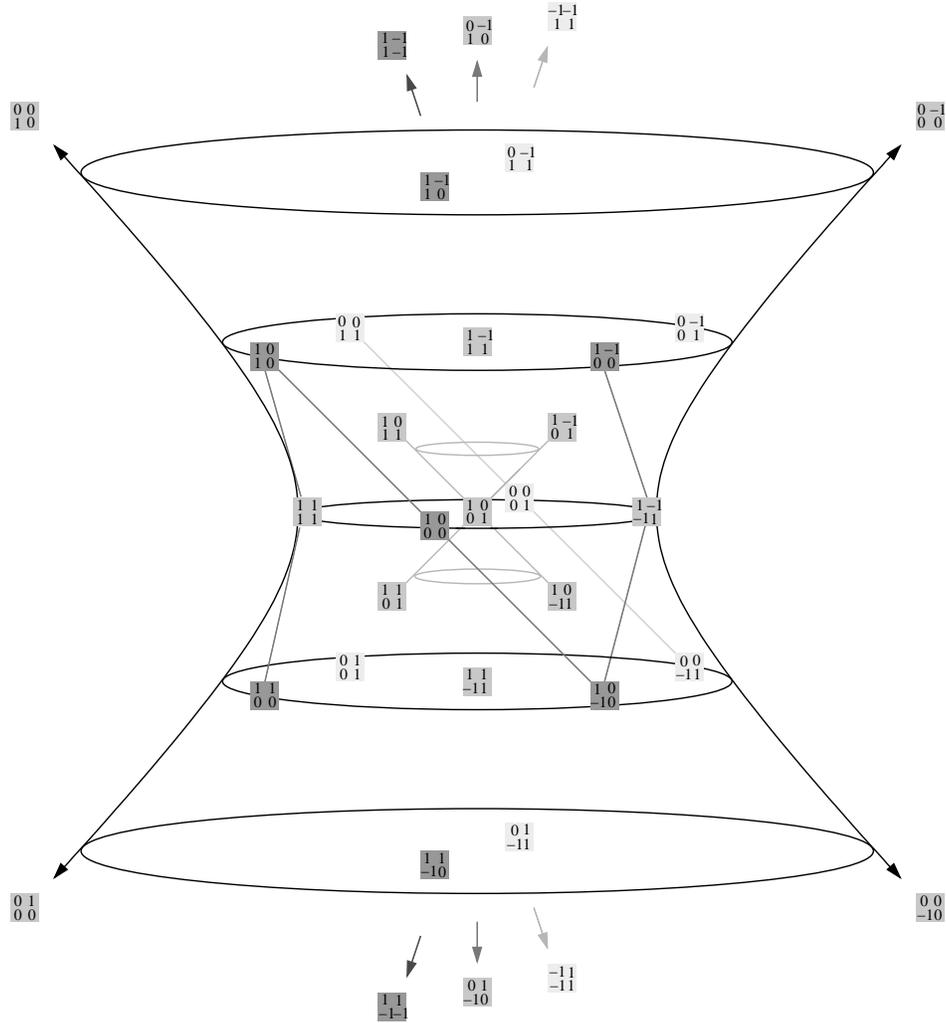}
\caption{A view of $G=\mathrm{PSL}(2,\mathbb{R})$ as the interior of a quadric in $\mathbb{P}^3\mathbb{R}$. We have plotted all matrices that have a representative with entries in $\{1,-1,0\}$. The identity matrix is in the center, and the traceless matrices are at infinity in directions indicated by little arrows. Matrices at infinity are identified in opposite pairs, while for all other matrices we chose the representative with positive trace. Shades of grey are used to help indicate depth (closer is darker) and have no mathematical meaning.} \label{GL2} 
\end{figure}

Let $\mathbf{G}_1:=G\times G$, with multiplication given as usual by $(g,h)(g',h')=(gg', hh')$. 
Let $\mathbf{G}_2:=G\rtimes G$ where the right factor acts on the left factor by conjugation: in other words, the product in $\mathbf{G}_2$ is given by 
\begin{equation}
(\alpha, a)(\beta, b)=(\alpha a \beta a^{-1}, ab)~. \label{semidirectproduct}
\end{equation}
There is a natural isomorphism 
$$\begin{array}{rrcl} \varphi: & \mathbf{G}_2 & \longrightarrow & \mathbf{G}_1 \\ & (\alpha, a) &\longmapsto & (\alpha a, a)~. \end{array}$$

Moreover, $\mathbf{G_2}$ acts on $G$ by the formula
\begin{equation}
(\alpha, a)\cdot_2 x := \alpha a x a^{-1} \label{semidirectaction}
\end{equation}
which by \eqref{semidirectproduct} is readily seen to define a group action. Up to the isomorphism $\varphi$, this is also the action performed by $\mathbf{G}_1$ on $G$, via the definition
$$(g,h)\cdot_1 x := gxh^{-1};$$
namely $\varphi(\alpha, a)\cdot_1 x = (\alpha, a)\cdot_2 x$. Note the subscripts $1$ and $2$.

The semidirect product description $\mathbf{G}_2$ is convenient to express the stabilizer of the identity $e\in G$ under $\cdot_2$: it is simply $\{1\}\rtimes G \subset \mathbf{G}_2$. Its image under $\varphi$, which is the stabilizer of $e$ under the $\cdot_1$ action, coincides with the diagonal copy of $G$ in $G\times G=\mathbf{G}_1$. On the other hand, the direct product description $\mathbf{G}_1$ is convenient to express representations of an arbitrary group into $\mathbf{G}_1$: they are just pairs of representations into $G$. Both descriptions will be relevant in the sequel.

Consider finally $\mathbf{G}':=\mathfrak{g}\rtimes G$, where $G$ acts on $\mathfrak{g}$ by the adjoint representation. We can view $\mathbf{G}'$ as a version of $\mathbf{G}_2=G\rtimes G$ in which the left factor $G$ has been scaled up to infinity, becoming $\mathfrak{g}$. This is because the conjugation action of $G$ on~$G$ induces the adjoint action of $G$ on $T_{\mathrm{Id}}G\simeq \mathfrak{g}$. The group $\mathbf{G}'$ acts on $\mathfrak{g}$ by affine transformations, via
\begin{equation} (A,a)\cdot' x:=A+\mathrm{Ad}(a)(x) \label{affineaction} \end{equation}
which is the scaled-up version of the $\cdot_2$ action \eqref{semidirectaction}.

Two parallel and natural questions in the study of Lorentzian manifolds are: which subgroups of $\mathbf{G}_1\simeq \mathbf{G}_2$ act properly discontinuously on $G$? Which subgroups of $\mathbf{G}'$ act properly discontinuously on $\mathfrak{g}$? The quotients of $G$ are called complete Anti-de Sitter manifolds (Anti-de Sitter space is another name for $G$ endowed with its Killing form and isometry group $\mathbf{G}_2$); the quotients of $\mathfrak{g}$ are called Margulis spacetimes, as Margulis exhibited the first examples with action by a free group \cite{mar83, mar84}. 
The purpose of this note is to summarize and provide a reading guide for some recent results (\cite{GK, DGK1, DGK2}) on these quotients, showing in particular that quotients of~$\mathfrak{g}$ can be seen as ``geometric limits'', in a natural sense, of quotients of $G$. Rich context for Lorentzian manifolds can be found in the seminal paper \cite{mes90}.

\subsection{Plan of the paper}
Section \ref{basics} reviews the Lorentzian geometry of $G$ and $\mathfrak{g}$. Section \ref{contracting} gives a sufficient criterion for proper discontinuity of actions on $G$, in terms of \emph{contracting maps} for the metric on $\mathbb{H}^2$. An analogue for $\mathfrak{g}$ is also given. The main results, with sketches of proofs, can then be formulated in Section \ref{results}. In \ref{existence} we show the above criteria are also necessary. In \ref{limits} we study the transition from $G$ to $\mathfrak{g}$ in terms of the geometry of their quotients. In \ref{classification} we describe a natural classification of the quotients of $\mathfrak{g}$ in terms of a combinatorial object, the \emph{arc complex}.

\subsection*{Acknowledgments}
The material summarized in this note is mostly joint work with J.\ Danciger and F.\ Kassel. It is based on a series of lectures given at Almora (Uttarakhand) in December 2012, at the conference organized in the honor of R.\ Kulkarni's 70th birthday. It also owes much to discussions with T.\ Barbot, W.M.\ Goldman, V.\ Charette and T.\ Drumm. I am very indebted to Y.\ Minsky and F.\ \nolinebreak Labourie for sparking my initial interest in the subject. The Institut Henri Poincar\'e (Paris) and the Institut CNRS--Pauli (UMI 2842, Vienna) provided excellent working conditions.

\section{The pseudo-Riemannian geometry of $G$} \label{basics}

We begin with some remarks on the embedding $G\subset \mathbb{P}^3\mathbb{R}$ as the interior of a quadric $\partial_\infty G$. Figure \ref{AdSMink} shows the group $G$ in the same normalization as Figure \ref{GL2}, but with matrices deleted and some remarkable subsets highlighted instead (which we discuss below).

\begin{figure}[h!] \centering
\psfrag{G}{$G$}
\psfrag{g}{$\mathfrak{g}$}
\psfrag{k}{$\mathfrak{k}$}
\psfrag{a}{$\mathfrak{a}$}
\psfrag{t}{$\mathfrak{t}$}
\psfrag{A}{$A$}
\psfrag{C}{$C$}
\psfrag{S}{$S$}
\psfrag{T}{$T$}
\psfrag{K}{$K$}
\psfrag{J}{$J$}
\psfrag{zoom in}{\scriptsize{zoom in}}
\psfrag{to Id}{\scriptsize{to $\mathrm{Id}$}}
\psfrag{I}{$:=\mathrm{Id}$}
\psfrag{d}{$\partial_\infty G$}
\includegraphics[width=11cm]{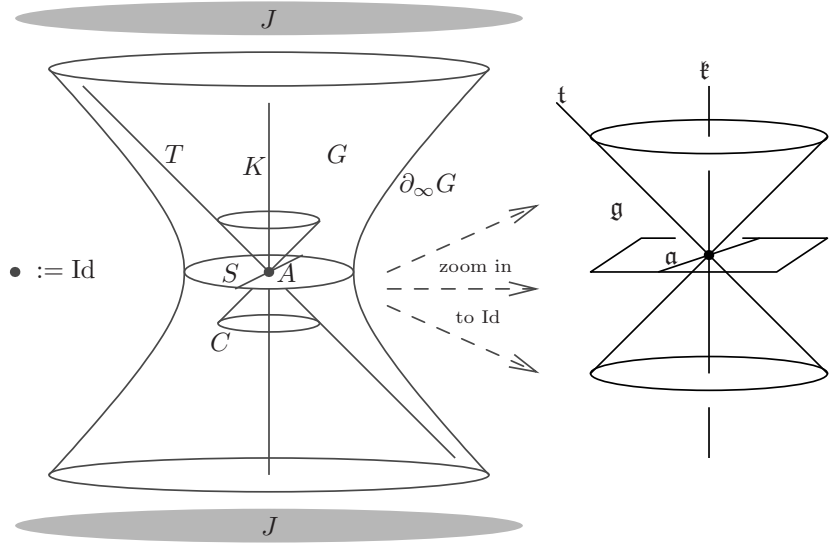}
\caption{Another view of $G=\mathrm{PSL}(2,\mathbb{R})$ and its tangent space $\mathfrak{g}$ at the identity element, marked by a dark point.} \label{AdSMink} 
\end{figure}

\smallskip

\subsection{Projective action}~

\noindent $\bullet$ The group $\mathbf{G}:=\mathbf{G}_1\simeq \mathbf{G}_2$ acts on $G$ by projective transformations. This is the neutral connected component of the full group of projective transformations of $\mathbb{P}^3\mathbb{R}$ preserving $G$ (a copy of the $6$-dimensional Lie subgroup $\mathrm{PO}(2,2;\mathbb{R})$ of the 15-dimensional projective group $\mathrm{PGL}(4,\mathbb{R})$). Here, the signature $(2,2)$ is that of the determinant $ad-bc$ seen as a quadratic form.

\smallskip 

\noindent $\bullet$ To visualize the action of the full group $\mathbf{G}$ on $G$ requires some effort, but we can describe the action of the stabilizer $1\rtimes G\subset \mathbf{G}_2$ of the identity. Namely, this action identifies with the action of $G$ on itself by conjugation, which globally preserves the projective plane at infinity (traceless matrices), and therefore restricts to an affine action on the complement of that projective plane. Since this action also fixes the identity, we can view it simply as a linear action on Figures \ref{GL2} and \ref{AdSMink}, preserving a one-sheeted hyperboloid $\mathbb{R}^3 \cap \partial_\infty G$ in $\mathbb{R}^3$. This linear action of $\{1\}\rtimes G$ passes to the tangent space $\mathfrak{g}$ at the identity, under the name of adjoint action (Figure \ref{AdSMink}).

\smallskip

\subsection{Remarkable subsets}~

\noindent $\bullet$ The space $\partial_\infty G$ of rank-one projective matrices is topologically a torus which comes with two natural foliations into projective lines: one line of the first foliation consists of all matrices with the same kernel; one line of the second foliation consists of all matrices with the same image. Some of these lines are drawn in Figure \ref{GL2}.

\smallskip 

\noindent $\bullet$ Symmetric matrices with positive determinant appear as the intersection $S$ of $G$ with a projective plane, namely the horizontal plane through the central element (identity matrix) in Figures \ref{GL2} and \ref{AdSMink}. This intersection is projectively equivalent to a round disk, and can be naturally identified with $\mathbb{H}^2$, as follows: a projective matrix belongs to $S$ if and only if it acts on $\mathbb{H}^2$ as a translation along an axis through the basepoint ($\sqrt{-1}$ in the upper half plane model). For each point $p\in \mathbb{H}^2$ there is a unique such translation taking $\sqrt{-1}$ to $p$. This provides the identification $S\simeq \mathbb{H}^2$.

\smallskip 

\noindent $\bullet$ The group $K$ of rotation matrices (fixing $\sqrt{-1}$ in $\mathbb{H}^2$) is the projective line of matrices whose diagonal entries are equal and whose nondiagonal entries are opposite. This line $K$ appears vertical in Figure \ref{AdSMink}; it closes up at infinity and is disjoint from $\partial_\infty G$. The Lie algebra $\mathfrak{k}$ of $K$ is a line in $\mathfrak{g}$.

\smallskip 

\noindent $\bullet$ The group $A$ of diagonal matrices (translations along the line $(0,\infty)$ through $\sqrt{-1}$ in $\mathbb{H}^2$) is the projective open segment of matrices whose nondiagonal entries are $0$, and whose diagonal entries have the same sign. This segment $A$ appears in Figure \ref{AdSMink} as a horizontal segment through the identity matrix, contained in $S$, directed towards the viewer. The Lie algebra $\mathfrak{a}$ of $A$ is a line in $\mathfrak{g}$.

\smallskip 

\noindent $\bullet$ The parabolic elements of $G$ form a cone $C$ (or degenerate quadric) with vertex at the identity element. This cone is shown in Figures \ref{GL2} and \ref{AdSMink}, containing for example the group $T$ of lower triangular matrices as a projective line. The Lie algebra $\mathfrak{t}$ of $T$ is again a line in $\mathfrak{g}$.

\smallskip 

\noindent $\bullet$ The space $J$ of projectivized, traceless matrices with positive determinant is the intersection of $G$ with a projective plane (the plane at infinity in Figures \ref{GL2} and \ref{AdSMink}), and is again projectively equivalent to a round disk. A matrix in $G$ is traceless if and only if it acts on $\mathbb{H}^2$ as a rotation of angle $\pi$; for any $p\in \mathbb{H}^2$ there is a unique such rotation taking the basepoint $\sqrt{-1}$ to $p$: thus, traceless elements of $G$ identify again with $\mathbb{H}^2$. In fact, multiplication by $(^0_1{~}^{\text{-1}}_{\,0})\in K$ (on either side) swaps the copies $J$ and $S$ of $\mathbb{H}^2$. 

\smallskip 

\subsection{Lorentzian metric on $G$}~

\noindent $\bullet$ Through two distinct points $a,b$ of $G$ passes a unique projective line. If this line intersects $\partial_\infty G$ in two distinct points $a', b'$, with $a', a, b, b'$ in cyclic order, then the cross-ratio $[a':a:b:b']$ (defined so that $[0:1:t:\infty]=t$) is a number in $(1,+\infty)$. In analogy with the definition of the hyperbolic distance between points of the projective model of hyperbolic space, we may define $$\delta(a,b):=\frac{1}{2} \log [a':a:b:b']>0~.$$ This function $\delta$ is \emph{not} a metric on $G$, but it is preserved by the action of $\mathbf{G}$: it is helpful to speak of $\delta(a,b)$ as the ``Lorentzian distance'' from $a$ to $b$. When restricted to a round section of $G$ such as the space of symmetric matrices $S$ or of traceless matrices $J$, the function $\delta$ does define a metric, isometric to $\mathbb{H}^2$.

\smallskip 

\noindent $\bullet$ If the projective line through $a,b\in G$ is disjoint from $\partial_\infty G$, then applying the same definition to the two \emph{imaginary} intersection points of that line with $\partial_\infty G$ leads to a number $\delta(a,b)$ that is pure imaginary, and is defined only modulo $i\pi$ and sign. If the projective line through $a,b\in G$ is tangent to $\partial_\infty G$, we can define $\delta(a,b):=0$ to make $\delta$ continuous on $G\times G$. 

\smallskip

\noindent $\bullet$ The points of $G$ whose Lorentzian distance $\delta$ to the identity matrix is pure imaginary, say $i\theta$ with $\pm \theta\in[0,\frac{\pi}{2}]$, are the matrices acting on $\mathbb{H}^2$ as rotations of angle $2\theta$. Equivalently, they are the matrices with trace $\pm 2\cos \theta \in (-2,2)$. Equivalently still, they are the matrices inside the lightlike cone $C$ pictured in Figure \ref{AdSMink}. Matrices on $C$ are parabolic isometries of $\mathbb{H}^2$ (of trace $\pm 2$, fixing exactly one point at infinity). Matrices outside $C$ are hyperbolic translations of $\mathbb{H}^2$, fixing two points at infinity. They are the matrices whose Lorentzian distance $\delta$ to the identity matrix is real positive, say equal to some $\lambda>0$: their trace is then $2\cosh \lambda$ as they translate $\mathbb{H}^2$ by a length $2\lambda$. More generally, $$\delta(a,b)=\mathrm{arccosh}\left (\frac{1}{2}|\mathrm{Tr}(a^{-1}b)|\right )$$ for all $a,b\in G$, with the natural convention $\mathrm{arccosh}(t)=i\, \mathrm{arccos}(t)$ when $0\leq t < 1$. Note that the total length of the loop $K$ is then $i\pi$ (not $2i\pi$).

\smallskip 

\noindent $\bullet$ The space of all projective lines intersecting $G$ has exactly three orbits, characterized by the number of intersection points with $\partial_\infty G$: either $2$ (orbit of $A$ or ``spacelike'' lines), or $0$ (orbit of $K$ or ``timelike'' lines), or $1$ (orbit of $T$, which is tangent to $\partial_\infty G$ at infinity, or ``lightlike'' lines). In terms of the Lorentzian distance $\delta$ on $G$, there is nothing special about the identity matrix: any point $g\in G$ is the vertex of a light cone, defined as the union of all lines through $g$ tangent to $\partial_\infty G$.

\smallskip

\noindent $\bullet$ Intuitively, if one rescales the Lorentzian distance $\delta$ on $G$ while zooming in near the identity, the resulting object is the square root of (a multiple of) the standard flat Lorentzian form on $\mathfrak{g}\simeq\mathbb{R}^3$, also known as the Killing form.

\smallskip

\section{Contracting maps and properly discontinuous actions} \label{contracting}
This section is a review of \cite[\S 6.2]{DGK1}: we explain the link between contracting maps and properly discontinuous actions, in the ``easy'' direction.
\subsection{Timelike lines and maps $\mathbb{H}^2\rightarrow \mathbb{H}^2$}~
Since the group $K$ of rotation matrices is the stabilizer of the basepoint $\sqrt{-1}$ of $\mathbb{H}^2$, any double coset $gKh^{-1}=(g,h)\cdot_1 K$ of $K$ (where $(g, h)\in G\times G = \mathbf{G}_1$) can be seen as the space of all orientation-preserving isometries of $\mathbb{H}^2$ taking $q:=h(\sqrt{-1})$ to $p:=g(\sqrt{-1})$. Such a double coset will therefore be written $${}_pK_q:=\{g\in G~|~ g(q)=p\}~,$$ so that for example ${}_pK_q\; {}_qK_r = {}_pK_r$.  Here is an important result:

\begin{proposition} \label{Lip}
Suppose $f:\mathbb{H}^2 \rightarrow \mathbb{H}^2$ is a $C$-Lipschitz map with $C<1$. Then the collection of lines $\{_{f(p)}K_p\}_{p\in \mathbb{H}^2}$ form a fibration of $G$.
\end{proposition}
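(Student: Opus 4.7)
The essential content of the proposition is that the timelike lines ${}_{f(p)}K_p$ partition $G$, and do so in a continuous, locally trivial way. My plan is to establish the set-theoretic partition by a contraction-mapping argument, and then to upgrade it to a globally trivial fibration by means of explicit sections.

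For the partition, note that $g \in {}_{f(p)}K_p$ is equivalent to $g(p)=f(p)$, and hence to $p$ being a fixed point of the map $F_g := g^{-1}\circ f : \mathbb{H}^2 \to \mathbb{H}^2$. Since $g^{-1}$ is an isometry and $f$ is $C$-Lipschitz with $C<1$, the composition $F_g$ is a $C$-contraction of the complete metric space $\mathbb{H}^2$. Banach's fixed-point theorem then produces a unique $p_g \in \mathbb{H}^2$ with $g(p_g)=f(p_g)$, so each $g \in G$ lies on exactly one line of the family. Uniqueness can also be seen directly: if $g(p)=f(p)$ and $g(q)=f(q)$ with $p \ne q$, then $d(f(p),f(q)) = d(g(p),g(q)) = d(p,q)$, contradicting the strict contraction hypothesis.

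For the fibration structure, I would first observe that the projection $\pi : G \to \mathbb{H}^2$ sending $g$ to its fixed point $p_g$ is continuous, because the family $\{F_g\}_{g \in G}$ is uniformly $C$-Lipschitz and depends continuously on $g$ (the parametric version of Banach's theorem: a continuous family of uniform contractions has continuously varying fixed points). Next, I would choose a continuous section $\sigma : \mathbb{H}^2 \to G$ sending $p$ to the unique element of $S$ translating $\sqrt{-1}$ to $p$, under the identification $S \simeq \mathbb{H}^2$ recalled in Section~\ref{basics}, and set $\tau(p) := \sigma(f(p))$, so that $\tau(p)(\sqrt{-1}) = f(p)$. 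The map
\[
\Psi : \mathbb{H}^2 \times K \longrightarrow G, \qquad (p,k) \longmapsto \tau(p) \cdot k \cdot \sigma(p)^{-1}
\]
is continuous and sends $\{p\} \times K$ bijectively onto ${}_{f(p)}K_p$ (since $\tau(p)^{-1} g\, \sigma(p)$ fixes $\sqrt{-1}$ iff $g(p)=f(p)$); combined with the partition claim, $\Psi$ is a continuous bijection. As both sides are topological $3$-manifolds, invariance of domain makes $\Psi$ a homeomorphism, exhibiting $G$ as a trivial $K$-bundle over $\mathbb{H}^2$ whose fibers are precisely the lines ${}_{f(p)}K_p$.

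The real content of the proof is Banach's fixed-point theorem in the first step; the bulk of the remaining work, and the main place where care is needed, is in verifying that the resulting set-theoretic partition is a genuine fibration. The parametric contraction argument (continuity of $\pi$) together with the explicit trivialization $\Psi$ handle both issues at once, with the mild caveat that when $f$ is merely Lipschitz one obtains only a topological, rather than smooth, fibration.
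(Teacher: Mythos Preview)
Your argument is correct and follows essentially the same route as the paper: both reduce membership in ${}_{f(p)}K_p$ to $p$ being the unique fixed point of the contraction $g^{-1}\circ f$, and both deduce continuity of $g\mapsto p_g$ from the uniform contraction constant. The only difference is cosmetic: the paper stops after a direct $\varepsilon$-estimate for continuity of the projection, whereas you package the same facts via Banach's theorem and go on to build an explicit global trivialization $\Psi$ using the symmetric-matrix section and invariance of domain---extra detail, but not a different idea.
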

\begin{proof}
Note that ${}_{p'}K_p$ intersects ${}_{q'}K_q$ if and only if there exists an isometry of~$\mathbb{H}^2$ taking $p$ to $p'$ and $q$ to $q'$, or equivalently, if $d_{\mathbb{H}^2}(p,q)=d_{\mathbb{H}^2}(p',q')$. This never happens for $p'=f(p)$ and $q'=f(q)$ (unless $p=q$), because $f$ contracts all distances. Therefore, all lines $_{f(p)}K_p$ are pairwise disjoint. In fact, an element $g$ of $G$ belongs to $_{f(p)}K_p$ if and only if $g(p)=f(p)$, or equivalently, if $g^{-1}\circ f$ fixes~$p$. But $g^{-1}\circ f$ is $C$-Lipschitz because $g$ is an isometry of $\mathbb{H}^2$: therefore $g^{-1}\circ f$ fixes precisely one point of $\mathbb{H}^2$, which means that $g$ lies on precisely one line  $_{f(p)}K_p$. The lines $\{_{f(p)}K_p\}_{p\in \mathbb{H}^2}$ thus form a partition of $G$. 

Finally, the line that contains $g$ depends continuously on $g$: this amounts to the fact that the fixed point of $g^{-1}\circ f$ varies continuously with $g$. To check this fact, fix $\varepsilon>0$. If $h\in G$ is close enough to $g$ in the sense that it takes the fixed point $p$ of $g^{-1}\circ f$ within distance $(1-C)\varepsilon$ from $g(p)=f(p)$, then $d_{\mathbb{H}^2}(p,h^{-1}\circ f (p))\leq (1-C)\varepsilon$, which implies that the $\varepsilon$-ball centered at $p$ is stable under $h^{-1}\circ f$. Therefore this ball contains the fixed point of $h^{-1}\circ f$: this finishes the proof.
\end{proof}

We now derive an infinitesimal analogue.

The Lie algebra $\mathfrak{g}$ of $G$ can be seen as the space of all Killing fields on $\mathbb{H}^2$, where a Killing field $X$ is by definition a vector field whose flow is a one-parameter group of isometries (these isometries of $\mathbb{H}^2$ may be elliptic, parabolic or hyperbolic, depending on the position of $X$ relative to the infinitesimal light cone in $\mathfrak{g}$, right panel of Figure \ref{AdSMink}). The Lie subalgebra $\mathfrak{k}$ consists of all Killing fields that vanish at the basepoint $\sqrt{-1}$ of $\mathbb{H}^2$. Therefore, for any $(A,a)\in \mathbf{G}'=\mathfrak{g}\rtimes G$, the space $(A,a)\cdot' \mathfrak{k}=A+\mathrm{Ad}(a)(\mathfrak{k})\subset \mathfrak{g}$ consists of all Killing fields that coincide with $A$ at the point $p:=a(\sqrt{-1})$. If $v:=A(p)\in T_p\mathbb{H}^2$, then such a $\mathbf{G}'$-translate of $\mathfrak{k}$ can be written $${}_v \mathfrak{k}_p:=\{X\in\mathfrak{g}~|~X(p)=v\}$$ in analogy with the macroscopic case: it is the space of all Killing fields taking the value $v$ at the point $p$.

For $c\in \mathbb{R}$, say that a vector field $Y$ on $\mathbb{H}^2$ is \emph{$c$-lipschitz} (lowercase! to distinguish from the notion of a Lipschitz map between metric spaces) if 
\begin{equation} \label{dprime} \left . \frac{\mathrm{d}}{\mathrm{d}t} \right |_{t=0} d\left (\exp_p(tY(p)), \exp_q(tY(q))\right )\leq c d(p,q) \end{equation}
for all distinct $p,q\in \mathbb{H}^2$, where $\exp_p:T_p\mathbb{H}^2 \rightarrow \mathbb{H}^2$ is the exponential map. For example, a Killing field is $0$-lipschitz. Intuitively, a vector field is $c$-lipschitz when it is the right derivative at $t=0$ of a one-parameter family of deformations of the identity $f_t:\mathbb{H}^2\rightarrow \mathbb{H}^2$, with $f_0=\mathrm{Id}_{\mathbb{H}^2}$ and $f_t$ Lipschitz of constant $1+ct$. We allow $c<0$.

In $\mathbb{H}^2$, a continuous, $c$-lipschitz vector field $Y$ with $c<0$ always has a unique zero: indeed, using \eqref{dprime} we see that $Y$ is inward-pointing on any ball centered at a point $p\in\mathbb{H}^2$ of radius larger than $\Vert Y(p)\Vert /|c|$, hence $Y$ admits a zero in this ball by Brouwer's theorem. Uniqueness of the zero follows again from \eqref{dprime} and the assumption $c<0$.

\begin{proposition} \label{lip}
Suppose $Y$ is a $c$-lipschitz, continuous vector field on $\mathbb{H}^2$ with $c<0$. Then the lines $\{_{Y( p)}\mathfrak{k}_p\}_{p\in \mathbb{H}^2}$ form a fibration of $\mathfrak{g}$.
\end{proposition}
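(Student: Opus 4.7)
The plan is to follow the strategy of Proposition~\ref{Lip} in the infinitesimal setting: check (i) pairwise disjointness of the lines ${}_{Y(p)}\mathfrak{k}_p$, (ii) that they exhaust $\mathfrak{g}$, and (iii) that the resulting projection $\mathfrak{g}\to\mathbb{H}^2$ is continuous. The common ingredient will be the behaviour of the $c$-lipschitz property under addition of Killing fields.

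First I would record the linearity of condition \eqref{dprime}: by the first variation of arc length, the quantity $\left.\frac{\mathrm{d}}{\mathrm{d}t}\right|_{t=0} d(\exp_p(tV(p)),\exp_q(tV(q)))$ depends linearly on the vector field $V$ (it is a linear expression in the endpoint velocities $V(p),V(q)$), so the sum of a $c_1$-lipschitz and a $c_2$-lipschitz field is $(c_1+c_2)$-lipschitz. Since Killing fields are $0$-lipschitz, subtracting any element $X\in\mathfrak{g}$ from $Y$ leaves a $c$-lipschitz field.

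For (i), any element of ${}_{Y(p)}\mathfrak{k}_p \cap {}_{Y(q)}\mathfrak{k}_q$ is a Killing field $X$ with $X(p)=Y(p)$ and $X(q)=Y(q)$, so $Y-X$ is $c$-lipschitz, with $c<0$, and vanishes at both $p$ and $q$. Substituting two zeros into \eqref{dprime} yields $0\leq c\,d(p,q)$, which forces $p=q$. For (ii), given any $Z\in\mathfrak{g}$, the vector field $Y-Z$ is again $c$-lipschitz with $c<0$, so it admits a unique zero $p\in\mathbb{H}^2$ by the Brouwer-type argument recalled just before the proposition; this $p$ satisfies $Z\in{}_{Y(p)}\mathfrak{k}_p$, giving the desired covering.

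Continuity of the projection $Z\mapsto p(Z)$ is then proved exactly as in Proposition~\ref{Lip}: fix $Z$ with zero $p$ and let $Z'$ be nearby, so that $(Y-Z')(p)=(Z-Z')(p)$ is small; the $c$-lipschitz condition again forces $Y-Z'$ to be inward-pointing on a small ball around $p$, and hence its (unique) zero lies inside that ball. The main technical point is the additivity of the lipschitz constant under Killing perturbations; once that linearization step is in place, everything else is a direct transcription of the macroscopic argument.
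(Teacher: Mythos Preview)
Your proof is correct and follows essentially the same route as the paper's: disjointness and exhaustion both reduce to the fact that $Y-X$ is $c$-lipschitz for any Killing field $X$ and hence has a unique zero, and continuity is established by the same inward-pointing ball argument. The only difference is cosmetic --- you spell out the linearity of \eqref{dprime} via the first variation formula to justify that subtracting a Killing field preserves the lipschitz constant, whereas the paper simply asserts this (``$Y-X$ is $c$-lipschitz because $X$ is a Killing field'').
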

\begin{proof}
We mimic the proof of Proposition \ref{Lip}.
For $(p,v), (q,w)\in T\mathbb{H}^2$ with $p\neq q$, note that ${}_{v}\mathfrak{k}_p$ intersects ${}_{w}\mathfrak{k}_q$ if and only if there exists a Killing field on $\mathbb{H}^2$ that agrees with $v$ at $p$ and with $w$ at $q$, or equivalently, if $d_{\mathbb{H}^2}(\exp_p(tv),\exp_q(tw))=o(t)$. This never happens for $v=Y(p)$ and $w=Y(q)$, because $Y$ contracts all distances by~\eqref{dprime}. Therefore, all lines $_{Y(p)}\mathfrak{k}_p$ are pairwise disjoint. In fact, a Killing field $X\in \mathfrak{g}$ belongs to $_{Y(p)}\mathfrak{k}_p$ if and only if $X(p)=Y(p)$, or equivalently, if $Y-X$ vanishes at~$p$. But $Y-X$ is $c$-lipschitz because $X$ is a Killing field of $\mathbb{H}^2$: therefore $Y-X$ vanishes at precisely one point of $\mathbb{H}^2$, which means that $X$ lies on precisely one line  $_{Y(p)}\mathfrak{k}_p$. The lines $\{_{Y(p)}\mathfrak{k}_p\}_{p\in \mathbb{H}^2}$ thus form a partition of $\mathfrak{g}$. 

Finally, the line that contains $X\in \mathfrak{g}$ depends continuously on $X$: by continuity of $Y$, this amounts to the fact that the zero of $Y-X$ varies continuously with $X$. To check this fact, fix $\varepsilon>0$. If $X'\in \mathfrak{g}$ is close enough to $X$ in the sense that $\Vert X'(p)-X(p)\Vert < |c|\varepsilon$ (where $p\in \mathbb{H}^2$ is the unique zero of the vector field $Y-X$), then \eqref{dprime} shows that $Y-X'$ is inward-pointing on the $\varepsilon$-ball centered at $p$. Therefore this ball contains the unique zero of $Y-X'$: this finishes the proof.
\end{proof}

\subsection{Equivariance and properly discontinuous actions}
Propositions~\ref{Lip} and~\ref{lip} have an interesting upshot when the map $f:\mathbb{H}^2 \rightarrow \mathbb{H}^2$ or the continuous vector field $Y$ on $\mathbb{H}^2$ have an equivariance property.

Suppose $\Gamma$ is a free group or surface group and $j:\Gamma\rightarrow G=\mathrm{Isom}_0(\mathbb{H}^2)$ a Fuchsian representation. In other words, $j(\Gamma)\backslash \mathbb{H}^2$ is a hyperbolic surface, possibly with cusps and/or funnels. Suppose $\rho:\Gamma\rightarrow G$ is another representation, not necessarily Fuchsian. We may combine $\rho, j$ into a single representation 
$$(\rho, j):\Gamma\rightarrow \mathbf{G}_1=G\times G~.$$
We say that a map $f:\mathbb{H}^2 \rightarrow \mathbb{H}^2$ is $(j,\rho)$-equivariant if for all $p\in \mathbb{H}^2$ and $\gamma\in \Gamma$,
$$f(j(\gamma)\cdot p)=\rho(\gamma)\cdot f(p)~.$$
\begin{proposition} \label{Equiv}
Suppose there exists a $(j,\rho)$-equivariant, $C$-Lipschitz map $f:\mathbb{H}^2\rightarrow \mathbb{H}^2$ with $C<1$. Then the line fibration $\{{}_{f(p)}K_p\}_{p\in\mathbb{H}^2}$ of $G$ given by Proposition \ref{Lip} is invariant under the $\cdot_1$-action of $(\rho,j)(\Gamma)\subset \mathbf{G}_1$ on $G$, which must be properly discontinuous.
\end{proposition}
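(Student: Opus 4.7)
The plan is to verify that the fibration $\{{}_{f(p)}K_p\}_{p\in\mathbb{H}^2}$ is preserved by the $\cdot_1$-action of $(\rho,j)(\Gamma)$, and then to descend to $\mathbb{H}^2$, where proper discontinuity is automatic from the hypothesis that $j$ is Fuchsian.

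First I would unpack the definitions. Recall that $(g,h)\cdot_1 x = g x h^{-1}$ and ${}_{p'}K_p = \{g\in G \,|\, g(p)=p'\}$. Any element of $(\rho(\gamma),j(\gamma))\cdot_1 {}_{f(p)}K_p$ has the form $\rho(\gamma)\, g\, j(\gamma)^{-1}$ with $g(p)=f(p)$, and it sends $j(\gamma)\cdot p$ to $\rho(\gamma)\cdot f(p) = f(j(\gamma)\cdot p)$, using $(j,\rho)$-equivariance of $f$ in the last equality. Hence
\[
(\rho(\gamma),j(\gamma))\cdot_1 {}_{f(p)}K_p \ \subseteq\ {}_{f(j(\gamma)\cdot p)}K_{j(\gamma)\cdot p},
\]
and equality follows by running the same calculation with $\gamma^{-1}$. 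Thus $(\rho,j)(\Gamma)$ permutes the leaves of the fibration exactly as $j(\Gamma)$ permutes the points of the base $\mathbb{H}^2$.

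Next I would exploit the continuous projection $\pi\colon G\to\mathbb{H}^2$ furnished by Proposition~\ref{Lip}, sending $g$ to the unique $p$ with $g\in{}_{f(p)}K_p$. By the previous step $\pi$ is equivariant in the sense that $\pi((\rho(\gamma),j(\gamma))\cdot_1 g) = j(\gamma)\cdot\pi(g)$. Given a compact $C\subset G$, its image $\pi(C)$ is compact in $\mathbb{H}^2$, and any $\gamma$ with $(\rho(\gamma),j(\gamma))\cdot_1 C\cap C\ne\emptyset$ satisfies $j(\gamma)\cdot\pi(C)\cap\pi(C)\ne\emptyset$. Since $j$ is Fuchsian, hence discrete and faithful, only finitely many $\gamma\in\Gamma$ can meet this condition, which is precisely proper discontinuity of the $\cdot_1$-action on $G$.

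I do not anticipate any serious obstacle: the substantive work has been absorbed into Proposition~\ref{Lip}, and what remains is bookkeeping with the semidirect-product formulas together with a standard descent argument along a continuous equivariant fibration. The only point deserving a moment of thought is that compactness of the individual leaves (copies of $K$) is not actually needed; continuity of the projection $\pi$ and its compatibility with the action are enough, because $\pi$ sends compacts to compacts and transfers proper discontinuity from the base to the total space directly.
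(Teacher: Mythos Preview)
Your proposal is correct and follows essentially the same approach as the paper: verify that $(\rho,j)(\gamma)\cdot_1 {}_{f(p)}K_p = {}_{f(j(\gamma)\cdot p)}K_{j(\gamma)\cdot p}$ via the equivariance of $f$, then use the resulting equivariance of the projection $\pi:G\to\mathbb{H}^2$ to pull proper discontinuity back from the base. Your argument is slightly more explicit in spelling out the compact-set criterion for proper discontinuity, whereas the paper simply invokes that $j(\Gamma)$ acts properly discontinuously on $\mathbb{H}^2$; but the underlying reasoning is identical.
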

\begin{proof}
An isometry $g\in G$ of $\mathbb{H}^2$ takes $p$ to $f(p)$ if and only if $(\rho, j)(\gamma)\cdot_1 g = \rho(\gamma)gj(\gamma)^{-1}$ takes $j(\gamma)\cdot p$ to $\rho(\gamma)\cdot f(p)$. But the latter point is $f(j(\gamma)\cdot p)$: this shows 
$$(\rho, j)(\gamma)\cdot_1 \, {}_{f(p)}K_p={}_{f(j(\gamma)\cdot p)}K_{j(\gamma)\cdot p}~,$$ for all $\gamma\in \Gamma$ and $p\in \mathbb{H}^2$. We have in fact just proved that the fibration $\pi:G\rightarrow \mathbb{H}^2$ taking $g\in G$ to the fixed point of $g^{-1}\circ f$ in $\mathbb{H}^2$ is $((\rho, j), j)$-equivariant. This proves proper discontinuity of the $(\rho,j)(\Gamma)\cdot_1$-action on the total space $G$, because $j(\Gamma)$ acts properly discontinuously on the base $\mathbb{H}^2$.
\end{proof}

We now develop an infinitesimal analogue (see \cite{gm00} for a related result). Suppose that $\Gamma$ is a free group and $j:\Gamma\rightarrow G=\mathrm{Isom}_0(\mathbb{H}^2)$ a Fuchsian representation. Suppose $u:\Gamma\rightarrow \mathfrak{g}$ is a $j$-\emph{cocycle}, that is, a map such that
$$(u,j)(\gamma):=(u(\gamma), j(\gamma))\in \mathfrak{g}\rtimes G = \mathbf{G}'$$
defines a representation $(u,j):\Gamma\rightarrow \mathbf{G}'$. Intuitively, $u$ measures the difference between $j$ and another, infinitesimally close representation $\rho$.

We say that a vector field $Y$ on $\mathbb{H}^2$ is $(j,u)$-equivariant if for all $p\in \mathbb{H}^2$ and $\gamma\in \Gamma$,
$$Y(j(\gamma)\cdot p)=j(\gamma)_*(Y(p))+u(\gamma)(j(\gamma)\cdot p)~.$$Intuitively, this means that $Y$ looks like the derivative at time $t=0$ of a smooth family of $(j,\rho_t)$-equivariant maps $f_t:\mathbb{H}^2 \rightarrow \mathbb{H}^2$, with $\rho_0=j$ and $f_0=\mathrm{Id}_{\mathbb{H}^2}$, such that $(\rho_t)_{t\geq 0}$ departs from $j$ in the direction $u$. Note that the adjoint action of $G$ on $\mathfrak{g}$ is the same as the pushforward action of $\mathrm{Isom}_0(\mathbb{H}^2)$ on Killing fields: for all $g\in G$ and $X\in \mathfrak{g}$,
$$([\mathrm{Ad}(g)](X))(g\cdot p)=g_*(X(p)) \in T_{g\cdot p}\mathbb{H}^2~.$$
\begin{proposition} \label{equiv}
Suppose there exists a $(j,u)$-equivariant, continuous, $c$-lipschitz vector field $Y$ on $\mathbb{H}^2$ with $c<0$. Then the line fibration $\{{}_{Y(p)}\mathfrak{k}_p\}_{p\in\mathbb{H}^2}$ of $\mathfrak{g}$ given by Proposition~\ref{lip} is invariant under the $\cdot'$-action of $(u,j)(\Gamma)\subset \mathbf{G}'$ on $\mathfrak{g}$, which must be properly discontinuous.
\end{proposition}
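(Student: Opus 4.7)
The plan is to mimic the proof of Proposition \ref{Equiv} almost verbatim, substituting Proposition \ref{lip} for Proposition \ref{Lip}. From Proposition \ref{lip} I get a continuous fibration $\pi:\mathfrak{g}\to\mathbb{H}^2$ sending each Killing field $X$ to the unique zero of the (still $c$-lipschitz) vector field $Y-X$. The central task is to show that $\pi$ is $\Gamma$-equivariant with respect to the $\cdot'$-action of $(u,j)(\Gamma)$ on the source $\mathfrak{g}$ and the $j(\Gamma)$-action on the target $\mathbb{H}^2$; once this is done, invariance of the fibration and proper discontinuity both follow.

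The key computation is as follows. Take $X\in{}_{Y(p)}\mathfrak{k}_p$, i.e.\ $X(p)=Y(p)$, and evaluate the translated Killing field $(u,j)(\gamma)\cdot' X=u(\gamma)+\mathrm{Ad}(j(\gamma))(X)$ at the point $j(\gamma)\cdot p$. Using the adjoint-action identity $([\mathrm{Ad}(g)](X))(g\cdot p)=g_*(X(p))$ displayed just before the proposition, this evaluation gives
$$u(\gamma)(j(\gamma)\cdot p)+j(\gamma)_*(Y(p)),$$
and the $(j,u)$-equivariance of $Y$ rewrites this as $Y(j(\gamma)\cdot p)$. Hence
$$(u,j)(\gamma)\cdot'{}_{Y(p)}\mathfrak{k}_p\subseteq{}_{Y(j(\gamma)\cdot p)}\mathfrak{k}_{j(\gamma)\cdot p},$$
and since the left-hand side is the image of a line under an affine bijection, it is itself a line, so the inclusion is an equality. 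This simultaneously shows that the collection $\{{}_{Y(p)}\mathfrak{k}_p\}_{p\in\mathbb{H}^2}$ is permuted by $(u,j)(\Gamma)$ and that $\pi\circ(u,j)(\gamma)=j(\gamma)\circ\pi$.

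Proper discontinuity then follows by the standard ``descent to the base'' argument, exactly as in Proposition \ref{Equiv}: for compact $K_1,K_2\subset\mathfrak{g}$, continuity of $\pi$ makes $\pi(K_i)$ compact, and any $\gamma$ with $(u,j)(\gamma)\cdot'K_1\cap K_2\neq\emptyset$ satisfies $j(\gamma)\cdot\pi(K_1)\cap\pi(K_2)\neq\emptyset$; since $j$ is Fuchsian, only finitely many $\gamma$ can achieve this. I do not expect any real obstacle, since all ingredients---the adjoint-action identity, the continuous fibration of Proposition \ref{lip}, and the general principle that proper discontinuity on the base of a continuous equivariant map pulls back to the total space---are already set up. The only bookkeeping care is to track that the equivariance conditions are written with factors in opposite orders on the vector-field side ($Y$ is $(j,u)$-equivariant) and on the group side ($(u,j)(\Gamma)$ acts via $\cdot'$); the adjoint-action identity is precisely what reconciles the two.
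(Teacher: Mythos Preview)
Your proposal is correct and follows essentially the same approach as the paper's proof: both mimic Proposition~\ref{Equiv}, use the adjoint-action identity together with the $(j,u)$-equivariance of $Y$ to establish $(u,j)(\gamma)\cdot' {}_{Y(p)}\mathfrak{k}_p = {}_{Y(j(\gamma)\cdot p)}\mathfrak{k}_{j(\gamma)\cdot p}$, and then deduce proper discontinuity from equivariance of the fibration over the properly discontinuous $j(\Gamma)$-action on $\mathbb{H}^2$. Your version is slightly more explicit about why the inclusion of fibers is an equality and about the compact-set bookkeeping for proper discontinuity, but these are expository rather than mathematical differences.
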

\begin{proof}
We mimic the proof of Proposition \ref{Equiv}. A Killing field $X\in \mathfrak{g}$ on $\mathbb{H}^2$ takes $p$ to $Y(p)\in T_p\mathbb{H}^2$ if and only if $(u,j)(\gamma)\cdot' X = u(\gamma) + \mathrm{Ad}(j(\gamma))(X)$ takes $j(\gamma)\cdot p$ to $u(\gamma) (j(\gamma)\cdot p) + j(\gamma)_*(Y(p))$. But the latter point is $Y(j(\gamma)\cdot p)$: this shows 
$$(u, j)(\gamma)\cdot' \, {}_{Y(p)}\mathfrak{k}_p={}_{Y(j(\gamma)\cdot p)}\mathfrak{k}_{j(\gamma)\cdot p}~,$$ for all $\gamma\in \Gamma$ and $p\in \mathbb{H}^2$. We have in fact just proved that the fibration $\pi':\mathfrak{g}\rightarrow \mathbb{H}^2$ taking $X\in \mathfrak{g}$ to the zero of $Y-X$ is $((u, j), j)$-equivariant. This proves proper discontinuity of the $(u,j)(\Gamma)\cdot'$-action on the total space $\mathfrak{g}$, because $j(\Gamma)$ acts properly discontinuously on the base $\mathbb{H}^2$.
\end{proof}
The reason we take for $\Gamma$ a free group (not a surface group) in Proposition \ref{equiv} is that for surface groups, no cocycle $u$ can admit an equivariant, $c$-lipschitz vector field with $c<0$: indeed the flow of such a vector field decreases areas to first order, whereas one can show equivariant vector fields must preserve total area to first order (because the area of a closed hyperbolic surface depends only on its genus). See also \cite[\S 2]{stretchmap}.

Note that under the conditions of Propositions \ref{Equiv} and \ref{equiv}, the geodesic fibrations descend to the quotient Lorentzian manifolds $(\rho, j)(\Gamma)\backslash G$ and $(u, j)(\Gamma)\backslash \mathfrak{g}$.

Are there other quotients of $G$ or $\mathfrak{g}$? Essentially no, up to finite index.
It actually follows from work of Kulkarni--Raymond \cite{kr85} that \emph{any} manifold that is a quotient of $G$ by a finitely generated group is virtually of the form given in Proposition~\ref{Equiv}, with $\Gamma$ a free group or surface group and $j:\Gamma\rightarrow G$ Fuchsian. Similarly, any manifold that is a quotient of $\mathfrak{g}$ by a finitely generated group is virtually of the form given in Proposition \ref{equiv}, with $\Gamma$ free and $j$ Fuchsian, unless the group is virtually solvable: this follows from work of Fried--Goldman \cite{fg83}.

\section{Main results} \label{results}
\subsection{Existence of contracting maps and fields} \label{existence}
It turns out that Propositions \ref{Equiv} and \ref{equiv} admit a converse ($\Gamma$ is still a finitely generated free group or surface group):

\begin{theorem} \label{mainmacro}
Suppose $(\rho, j):\Gamma\rightarrow \mathbf{G}_1=G\times G$ is a representation, with $j:\Gamma\rightarrow G$ Fuchsian. We assume that $j(\beta)$ is a hyperbolic translation of greater length than $\rho(\beta)$, for at least some $\beta\in \Gamma$ (where the length of an isometry of $\mathbb{H}^2$ other than a translation is defined to be $0$). The $\cdot_1$-action of $(\rho, j)(\Gamma)$ on $G$ is properly discontinuous if and only if there exists a $(j,\rho)$-equivariant, $C$-Lipschitz map $f:\mathbb{H}^2\rightarrow \mathbb{H}^2$ with $C<1$. 
\end{theorem}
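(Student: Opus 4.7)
The $(\Leftarrow)$ direction is exactly Proposition~\ref{Equiv}, so only the converse requires work: from proper discontinuity of the $\cdot_1$-action of $(\rho,j)(\Gamma)$ on $G$ I must produce a $(j,\rho)$-equivariant $C$-Lipschitz map with $C<1$. The plan is to split this into two independent steps.

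\textbf{Step 1: translation-length inequality.} Write $\lambda(g)$ for the translation length of $g\in G$ on $\mathbb{H}^2$, with $\lambda(g)=0$ whenever $g$ is not a hyperbolic translation. The aim is to derive from proper discontinuity the uniform bound
$$C_0 \;:=\; \sup_{\gamma\in \Gamma,\ j(\gamma)\text{ hyperbolic}} \frac{\lambda(\rho(\gamma))}{\lambda(j(\gamma))} \;<\; 1.$$
I would argue by contradiction. Given $\gamma_n\to\infty$ in $\Gamma$ with $\lambda(\rho(\gamma_n))/\lambda(j(\gamma_n))\to 1$ (the dominance at $\beta$ together with cocompactness of $j$ on its convex core prevents $\lambda(j(\gamma_n))\to 0$), choose $g_n\in G$ sending the axis of $j(\gamma_n)$ isometrically to the axis of $\rho(\gamma_n)$ with midpoints matched. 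Then
$$(\rho,j)(\gamma_n)\cdot_1 g_n \;=\; \rho(\gamma_n)\,g_n\,j(\gamma_n)^{-1}$$
also sends one axis to the other in the same way, hence differs from $g_n$ only by a translation along the image axis of length $\lambda(j(\gamma_n))-\lambda(\rho(\gamma_n))\to 0$. After modifying $g_n$ by elements of $j(\Gamma)$ to lie in a common fundamental domain, one finds a compact $K\subset G$ met by infinitely many $(\rho,j)(\gamma_n)\cdot_1 K$, contradicting properness. This is the rank-one version of Kobayashi's Cartan-projection criterion for proper actions.

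\textbf{Step 2: construction of the Lipschitz map.} Given $C_0<1$, the goal is to produce an equivariant $C$-Lipschitz map with $C<1$. This is the content of the main theorem of \cite{GK}: the infimum $L(j,\rho)$ of Lipschitz constants of $(j,\rho)$-equivariant self-maps of $\mathbb{H}^2$ equals $C_0$, and is attained. The $\geq$ direction is elementary, since any equivariant $C$-Lipschitz map must contract every hyperbolic translation length by $C$. For the $\leq$ direction, one chooses a $j(\Gamma)$-equivariant discrete set in $\mathbb{H}^2$, specifies its image equivariantly via~$\rho$, then extends by a Kirszbraun-type theorem adapted to the hyperbolic plane, using the translation-length inequality locally to bound the Lipschitz constant of the extension by $C_0+\varepsilon$.

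\textbf{Main obstacle.} The technical heart of the argument is the $\leq$ direction of Step~2. Controlling the Lipschitz constant globally is not a purely local matter: the standard strategy is to analyze the \emph{stretch locus} of a hypothetical optimal $f$, which turns out to be a $j(\Gamma)$-invariant geodesic lamination on $\mathbb{H}^2/j(\Gamma)$, and then to build $f$ explicitly in the spirit of Thurston's stretch maps by interpolating across the complementary regions. This is particularly delicate when $j$ has cusps (where one must additionally arrange $f$ to respect the horocyclic structure) or when $\Gamma$ is a closed surface group (where one must exhibit enough of the lamination to force optimality), and this is where the machinery of \cite{GK} does the bulk of the work.
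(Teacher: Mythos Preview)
Your decomposition differs from the paper's. The paper defines $C_0$ as the \emph{infimum of Lipschitz constants} of $(j,\rho)$-equivariant maps (not as the supremum of length ratios), shows it is attained, and then argues the contrapositive: if $C_0\ge 1$, the stretch locus of an optimal map is a geodesic lamination (via a Kirszbraun--Valentine argument), a leaf of which yields $\gamma$ with $\lambda(\rho(\gamma))\ge C_0\,\lambda(j(\gamma))$; this $\gamma$ is then combined with the given $\beta$ to produce words $\beta^n\gamma^{\psi(n)}$ with $\mu(\rho(\cdot))-\mu(j(\cdot))$ bounded, contradicting properness. The stretch-lamination machinery is thus used to \emph{produce} the bad length-ratio witness, not (as in your Step~2) to pass from a length-ratio bound to a Lipschitz bound.

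Your Step~1 has a genuine gap. From $\lambda(\rho(\gamma_n))/\lambda(j(\gamma_n))\to 1$ you cannot conclude $\lambda(j(\gamma_n))-\lambda(\rho(\gamma_n))\to 0$: since $\gamma_n\to\infty$ forces $\lambda(j(\gamma_n))\to\infty$ (away from cusps), the difference may diverge even when the ratio tends to~$1$. What is actually needed is a sequence with $\mu(\rho(\gamma_n))-\mu(j(\gamma_n))$ \emph{bounded}, and producing one requires interpolating between an element of ratio $\ge 1$ and the given $\beta$ of ratio $<1$ --- precisely the paper's Step~4, which your sketch omits. You also invoke convex cocompactness of $j$, which the theorem does not assume.

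Your Step~2 is close to circular: the equality between $L(j,\rho)$ and $\sup_\gamma \lambda(\rho(\gamma))/\lambda(j(\gamma))$ is itself one of the main results of \cite{GK}, proved via the stretch-lamination analysis. The shortcut you propose (equivariant extension from a discrete orbit by Kirszbraun) does not work: the hyperbolic Kirszbraun--Valentine theorem requires the Lipschitz constant to be at least~$1$ (Theorem~\ref{kirszbraun}), so it cannot manufacture a contraction, and in any case the Lipschitz constant on an orbit is governed by displacement ratios $\mu(\rho(\gamma))/\mu(j(\gamma))$ rather than translation-length ratios. The ``main obstacle'' you flag is therefore not a technicality but the entire content of the proof; the paper organises matters so that the lamination simultaneously realises $C_0$ and supplies the sequence violating properness.
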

Note that the assumption on lengths is always satisfied, up to exchanging $j$ and~$\rho$ (if no $\rho(\gamma)$ has shorter translation length than $j(\gamma)$, then $\rho$ is Fuchsian as well as~$j$). This theorem is proved in \cite{kasPhD} for $j$ convex-cocompact, and in \cite{GK} in full generality (and also higher dimension).
Here is the infinitesimal analogue.
\begin{theorem} \cite[Th.\ 1.1]{DGK2} \label{mainmicro}
Suppose $(u, j):\Gamma\rightarrow \mathbf{G}'=\mathfrak{g} \rtimes G$ is a representation, with $j:\Gamma\rightarrow G$ Fuchsian and convex-cocompact. The $\cdot'$-action of $(u, j)(\Gamma)$ on $\mathfrak{g}$ is properly discontinuous if and only if, up to exchanging $u$ with $-u$, there exists a $(j,u)$-equivariant, $c$-lipschitz, smooth vector field $Y$ on $\mathbb{H}^2$ with $c<0$. 
\end{theorem}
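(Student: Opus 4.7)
The ``if'' direction is already Proposition \ref{equiv}. For the converse, my plan is to infinitesimalize the Gu\'eritaud--Kassel proof of Theorem \ref{mainmacro}. Set
$$c(j,u)\ :=\ \inf\bigl\{c\in\mathbb{R} \,:\, \exists\ Y\ (j,u)\text{-equivariant, smooth, } c\text{-lipschitz on }\mathbb{H}^2\bigr\}.$$
For any smooth deformation $\rho_t$ of $j$ with derivative $u$, let $\alpha_u(\gamma):=\frac{d}{dt}\bigr|_{t=0}\ell(\rho_t(\gamma))$ be the resulting infinitesimal change in translation length (a well-defined $j$-Margulis-type invariant); note $\alpha_{-u}=-\alpha_u$. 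By Proposition \ref{equiv}, to produce the desired contracting vector field up to swapping $u\leftrightarrow -u$, it suffices to show that proper discontinuity forces $\min\bigl(c(j,u),c(j,-u)\bigr)<0$.

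The central tool is a length-derivative formula analogous to the Thurston stretch formula that underlies Theorem \ref{mainmacro}: for $j$ convex-cocompact,
$$c(j,u)\ =\ \sup_{[\gamma]\neq 1}\ \frac{\alpha_u(\gamma)}{\ell_j(\gamma)}\,.$$
The easy inequality $c(j,u)\geq\sup\alpha_u/\ell_j$ comes from integrating \eqref{dprime} along the axis of $j(\gamma)$: an equivariant $c$-lipschitz vector field stretches each closed geodesic in $j(\Gamma)\backslash\mathbb{H}^2$ by at most a factor of $c$ to first order, and this first order change in length along $\rho_t$ is precisely $\alpha_u(\gamma)$. The reverse inequality is the main obstacle: one must build a near-optimal equivariant vector field. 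Following the macroscopic strategy, I would locate a finite ``stretch locus'' inside the (compact) convex core of $j(\Gamma)\backslash\mathbb{H}^2$ where the supremum is nearly attained, place an optimal piecewise-Killing vector field along this locus via an infinitesimal Kirszbraun-type extension (compare \cite[\S 2]{stretchmap}), extend equivariantly through the funnels (where the dynamics is cyclic and easy to handle), and mollify to restore smoothness while preserving the strict bound on the infinitesimal Lipschitz constant.

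The remaining step is an infinitesimal Goldman--Labourie--Margulis criterion: proper discontinuity of the $\cdot'$-action of $(u,j)(\Gamma)$ on $\mathfrak{g}$ is equivalent to the existence of $\epsilon>0$ and a sign $\sigma\in\{\pm 1\}$ such that $\sigma\,\alpha_u(\gamma)\leq -\epsilon\,\ell_j(\gamma)$ for every nontrivial $[\gamma]$. Intuitively, $\alpha_u(\gamma)$ measures the component of the translation $u(\gamma)\in\mathfrak{g}$ along the axis of $j(\gamma)$; properness prevents this infinitesimal drift from changing sign across $\Gamma$, for otherwise one can cook up Cartan-projection data incompatible with a wandering-domain condition on $\mathfrak{g}$. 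Combined with the stretch formula, this yields $c(j,\sigma u)\leq -\epsilon<0$, closing the argument. The hardest step is the construction of the near-optimal vector field, i.e.\ the reverse inequality in the stretch formula: convex-cocompactness is essential here, reducing both the combinatorial description of the stretch locus and the mollification problem to a compact region with finite combinatorics.
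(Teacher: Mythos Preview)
Your overall architecture is sound and close to the paper's, with one legitimate reorganization: you offload the ``not proper'' direction to the Goldman--Labourie--Margulis criterion \cite{GLM}, whereas the paper re-derives it from the stretch-locus analysis. Since the paper cites \cite{GLM} anyway, this is a fair shortcut, and it reduces the theorem to the stretch formula $c(j,u)=\sup_\gamma \alpha_u(\gamma)/\ell_j(\gamma)$, whose easy inequality you have correctly identified.

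The gap is in your sketch of the hard inequality. Your plan ``place a piecewise-Killing field on a finite stretch locus, Kirszbraun-extend, then mollify'' underestimates the difficulty and misses the paper's key technical device. First, the stretch locus is a geodesic lamination, not a finite graph, and a piecewise-Killing field along it is not $c$-lipschitz in the sense of \eqref{dprime}: across the discontinuities, pairs of nearby points can separate at unbounded first-order rate, so standard mollification will not rescue the constant. Second, the infinitesimal Kirszbraun theorem you invoke does not hold in the category of vector fields; the paper shows that the natural minimizers obtained by Arzel\`a--Ascoli are genuinely multi-valued \emph{convex fields} (closed convex-set-valued sections of $T\mathbb{H}^2$), and the extension theorem must be developed in that category. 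The passage back to an honest smooth vector field is then a separate, nontrivial step: the paper writes the optimal convex field as $Y_0+Y_1$ with $Y_0$ smooth and $(j,u)$-equivariant, applies the \emph{backward geodesic flow} for a short time to the $j$-invariant piece $Y_1$ (which turns a convex field into a Lipschitz section, as one sees already in the one-dimensional model), and only then convolves. Without this mechanism, your mollification step has no reason to preserve a negative lipschitz constant. So the strategy is right, but the heart of the argument---convex fields and their regularization---is precisely what your sketch does not supply.
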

By Proposition \ref{equiv}, the quotients of $\mathfrak{g}$ arising from Theorem \ref{mainmicro} are (trivial) line bundles over surfaces with boundary, and therefore, are homeomorphic to open handlebodies. In particular, they are topologically tame, a fact obtained independently by Choi and Goldman \cite{cg13} via very different methods.

The convex-cocompact assumption in Theorem \ref{mainmicro} should be unnecessary, but makes proofs more straightforward.
We will only sketch a proof of these two theorems. For Theorem \ref{mainmacro}, the strategy can be broken up in several steps (we refer to \cite[\S 5]{GK} for details). 

\emph{Step 1:} Consider the infimum $C_0$ of possible Lipschitz constants for $(j,\rho)$-equivariant maps. This infimum is achieved by some map $f_0$, by the Ascoli-Arzel\`a theorem, except possibly in some degenerate cases (when $\rho$ fixes exactly one point at infinity of $\mathbb{H}^2$), which can be treated separately. By Proposition \ref{Equiv}, we only need to prove that if $C_0\geq 1$ then $(j,\rho)(\Gamma)$ does not act properly discontinuously on $G$.

\emph{Step 2:} Let $\mathcal{F}$ denote the space of all $C_0$-Lipschitz, $(j,\rho)$-equivariant maps $f:\mathbb{H}^2\rightarrow \mathbb{H}^2$. For all $f\in\mathcal{F}$, let $E_f\subset \mathbb{H}^2$ be the (closed) set of all points $p$ such that for every neighborhood $U$ of $p$ in $\mathbb{H}^2$, the smallest Lipschitz constant of the restriction $f|_U$ is still $C_0$. We call $E_f$ the \emph{stretch locus} of $f$; one should think of $f$ as ``good'' when $E_f$ is small. A compactness argument shows that $E_f$ is not empty.

The space $\mathcal{F}$ is convex for pointwise geodesic interpolation: if $f_0, f_1\in \mathcal{F}$ then for all $t\in [0,1]$, the map $f_t$ taking each $p\in \mathbb{H}^2$ to the unique point at distance $t\, d(f_0(p), f_1(p))$ from $f_0(p)$ and at distance $(1-t)\, d(f_0(p), f_1(p))$ from $f_1(p)$, is in~$\mathcal{F}$. This follows from the convexity, with respect to time, of the distance between two points travelling at constant velocities on geodesics of $\mathbb{H}^2$, itself a consequence of $\mathrm{CAT}(0)$--thinness of hyperbolic triangles. By the same argument, 
$$E_{f_t} \subset E_{f_0} \cap E_{f_1}$$
for all $0<t<1$. Iterating this barycentric construction for a sequence $(f_i)_{i\in\mathbb{N}}$ that is, in an appropriate sense, dense in $\mathcal{F}$, produces a map $f$ with smallest possible stretch locus: 
$$E_f = \bigcap_{\varphi\in \mathcal{F}} E_\varphi~.$$ 

\emph{Step 3:} Suppose first that $C_0>1$. 
We can then prove that $E_f$ is a geodesic lamination, \emph{i.e.} contains a unique germ of line through any of its points, and that $f$ takes any segment of any of those lines to a segment $C_0$ times longer (we say that the segment is \emph{$C_0$-stretched}). This is closely related to results of Thurston \cite{stretchmap}, though our approach is quite different. The idea is to use the following theorem of Kirszbraun and Valentine \cite{Kirszbraun34, Valentine44}:
\begin{theorem} \label{kirszbraun}
Any $C_0$-Lipschitz map from a subset of $\mathbb{H}^2$ to $\mathbb{H}^2$, with $C_0\geq 1$, has a $C_0$-Lipschitz extension to $\mathbb{H}^2$.
\end{theorem}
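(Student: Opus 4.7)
The plan is to reduce to a concrete three-point lemma via two standard compactness arguments, and then dispatch the three-point case by comparison with the Euclidean setting.

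First, by Zorn's lemma applied to the poset of $C_0$-Lipschitz extensions of $f$ (ordered by inclusion of domains), it suffices to prove a one-point extension: given a $C_0$-Lipschitz map $f:A\to\mathbb{H}^2$ and a point $p\in\mathbb{H}^2\smallsetminus A$, find $q\in\mathbb{H}^2$ with $d(q,f(a))\leq C_0\, d(p,a)$ for every $a\in A$. Equivalently, the intersection
\[ I_p \,:=\, \bigcap_{a\in A}\overline{B}\bigl(f(a),\, C_0\, d(p,a)\bigr) \]
of closed convex balls in $\mathbb{H}^2$ must be nonempty. Since each ball is closed and at least one of them (any single one with $a\neq p$) is bounded, nonemptiness of $I_p$ follows from the finite intersection property; and since we are in dimension $2$, Helly's theorem in $\mathbb{H}^2$ reduces the task to checking that any \emph{three} such balls intersect.

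The heart of the argument is therefore the three-point lemma: given $a_1,a_2,a_3,b_1,b_2,b_3\in\mathbb{H}^2$ with $d(b_i,b_j)\leq C_0\, d(a_i,a_j)$ and any further point $p\in\mathbb{H}^2$, there exists $q\in\mathbb{H}^2$ with $d(q,b_i)\leq C_0\, d(p,a_i)$ for $i=1,2,3$. I would proceed by the Valentine deformation: slide $b_1,b_2,b_3$ inward along geodesics, respecting their Lipschitz constraint, until either the three balls $\overline{B}(b_i,C_0\, d(p,a_i))$ already share a point (done) or some pairwise bound $d(b_i,b_j)=C_0\, d(a_i,a_j)$ is saturated. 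In the latter, extremal case, one forms a Euclidean comparison quadruple for $(a_1,a_2,a_3,p)$, invokes the elementary Euclidean Kirszbraun theorem to obtain a candidate point $\tilde q\in\mathbb{R}^2$, and transfers $\tilde q$ back to $\mathbb{H}^2$ as a geodesic barycenter of the $b_i$ with the appropriate Euclidean weights.

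The transfer relies on the fact that hyperbolic triangles are \emph{thinner} than their Euclidean comparison triangles of equal side lengths: distances between interior barycentric points can only decrease when passing from $\mathbb{R}^2$ to $\mathbb{H}^2$. Combined with the slack $C_0\geq 1$, this yields the required hyperbolic Lipschitz bounds from the Euclidean ones. The principal obstacle is precisely this transfer step: it requires the \emph{constant} negative curvature of $\mathbb{H}^2$ (so that one may match the side lengths exactly in the comparison, not just bound them one way) together with the hypothesis $C_0\geq 1$ (to absorb the inequality produced by the deformation/comparison). Once the three-point lemma is in hand, the passages via Helly and Zorn are routine.
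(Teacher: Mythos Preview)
The paper does not prove this theorem: it is cited as a classical result of Kirszbraun and Valentine and used as a black box in Step~3 of the sketch of Theorem~\ref{mainmacro}. So there is no proof in the paper to compare your proposal against.

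On its own merits: your reduction via Zorn's lemma and Helly's theorem (which does hold for geodesically convex sets in $\mathbb{H}^2$) to a three-ball intersection problem is standard and correct. The gap lies in your handling of the three-point lemma. A ``Euclidean comparison quadruple'' for $(a_1,a_2,a_3,p)\in(\mathbb{H}^2)^4$ generically does not exist: four points of $\mathbb{H}^2$ have six pairwise distances that typically cannot be realized in $\mathbb{R}^2$, so you cannot simply invoke the Euclidean Kirszbraun theorem on such a configuration. Likewise, ``transferring $\tilde q$ back to $\mathbb{H}^2$ as a geodesic barycenter of the $b_i$ with the appropriate Euclidean weights'' is not a construction that comes with the distance bounds you need; hyperbolic barycenters do not control distances to the vertices the way Euclidean affine combinations do, and CAT$(0)$ thinness gives inequalities in only one direction. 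Finally, your ``Valentine deformation'' (sliding the $b_i$ inward) is not specified precisely enough to see what it converges to or why saturation of a single pairwise constraint yields a configuration to which your comparison applies.

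One clean way to make the comparison idea work, which also makes the role of $C_0\geq 1$ transparent: rescale the source metric by the factor $C_0$. The domain then has constant curvature $-1/C_0^2\in[-1,0)$ and the map becomes $1$-Lipschitz into the CAT$(-1)$ target $\mathbb{H}^2$. The three-point (indeed $n$-point) extension lemma now falls under the Lang--Schroeder theorem for $1$-Lipschitz maps from spaces of curvature $\geq\kappa$ into CAT$(\kappa)$ spaces; alternatively one can follow Valentine's original hyperbolic-trigonometric argument directly.
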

To prove that $E_f$ is a lamination, we apply this theorem in the following way. Pick a small ball $B$ of radius~$r$ centered at $p\in E_f$. 
Any $C_0$-Lipschitz extension $f'$ of the restriction $f|_{\partial B}$ to $\{p\}\cup \partial B$ satisfies $$\max_{q\in \partial B}d(f'(p),f'(q))/r=C_0~:$$ otherwise, a constant extension of $f'$ near $p$ would still be $C_0$-Lipschitz, and pasting this map with $f|_{\mathbb{H}^2\smallsetminus B}$ we could apply the theorem to find a map $f''\in \mathcal{F}$ with $p\notin E_{f''}$: absurd. Therefore, $$Q:=\{q\in \partial B~|~d(f'(p),f'(q))=rC_0\}$$ is not empty: at least some germs of \emph{rays} issued from $p$ are $C_0$-stretched. No half-plane of $T_{f'(p)}\mathbb{H}^2$ contains the directions of the images of all these rays, for otherwise we could push $f'(p)$ into such a half-plane and again decrease the stretch locus. 
To find a germ of \emph{line}, we must prove that $Q$ consists of only a pair of opposite points. This can be seen by refining the argument above, and using $$d(\exp_p(C_0 rv), \exp_p(C_0 rv'))> C_0\, d(\exp_p(rv), \exp_p(rv'))$$ for all independent unit vectors $v,v'\in T_p\mathbb{H}^2$ (again a consequence of convexity of the distance function, this is sometimes called Toponogov's inequality).

\emph{Step 4:} (see also \cite[\S 7]{GK}) The geodesic lamination $E_f\subset \mathbb{H}^2$ projects to a geodesic lamination $\Lambda$ of the quotient surface $j(\Gamma)\backslash \mathbb{H}^2$. Suppose for simplicity that $\Lambda$ contains a closed leaf, corresponding to the conjugacy class of an element $\gamma\in \Gamma$: then $\rho(\gamma)$ has ($C_0$ times) greater translation length than $j(\gamma)$, as a translation of $\mathbb{H}^2$. But by assumption, there exists $\beta\in \Gamma$ such that $\rho(\beta)$ has smaller translation length than $j(\beta)$. It is then not hard to find a sequence $\psi(n)\rightarrow \infty$ such that 
\begin{equation}
\mu(\rho(\beta^n \gamma^{\psi(n)}))=\mu(j(\beta^n \gamma^{\psi(n)}))+O(1)~, \label{mumu} 
\end{equation}
where by definition $\mu(g)=d(\sqrt{-1},g\cdot \sqrt{-1})$ for all $g\in G$ and $\sqrt{-1}$ is the basepoint of $\mathbb{H}^2$ (this follows from $\mu(g^{-1})=\mu(g)$, the triangle inequality $\mu(gh)\leq \mu(g)+\mu(h)$, and the fact that $\mu(g^n)/n$ limits to the translation length of $g$). The relationship~\eqref{mumu} shows that $(\rho,j)(\Gamma)$ cannot act properly discontinuously on $G$: indeed $\gamma_n:=\beta^n \gamma^{\psi(n)}$ has the property that the set $$\rho(\gamma_n)C_Rj(\gamma_n)^{-1}=(\rho, j)(\gamma_n)\cdot_1 C_R$$ intersects $C_R$ for all $n$, where by definition $C_R$ is the compact subset of $G$ consisting of all elements $g$ such that $\mu(g)\leq R$ (and $R$ is chosen larger than the error term in \eqref{mumu}).

\emph{Step 5:} It remains to remove the simplifying assumptions made above. If the geodesic lamination $\Lambda$ does \emph{not} contain any simple closed curves, then we can still follow a leaf of $\Lambda$ until it nearly closes up, thus defining the conjugacy class of an element $\gamma\in \Gamma$ such that $\rho(\gamma)$ has greater translation length than $j(\gamma)$ (as a translation of $\mathbb{H}^2$), and apply the same argument. Finally, if $C_0=1$, a more thorough discussion of the stretch locus $E_f$ shows that it consists of the union of a geodesic lamination $\Lambda$ and (possibly) some complementary components: elements $\gamma_n\in \Gamma$ defined by following leaves of $\Lambda$ for a great length satisfy directly the \eqref{mumu}-like relationship $\mu(\rho(\gamma_n))=\mu(j(\gamma_n))+O(1)$ that violates proper discontinuity of the $(\rho,j)(\Gamma)\cdot_1$-action on $G$.

\smallskip

For Theorem \ref{mainmicro}, the strategy is similar: define the infimum $c_0$ of possible lipschitz constants for $(u,j)$-equivariant vector fields on $\mathbb{H}^2$, find ``optimal'' vector fields realizing $c_0$, and define and study their (infinitesimal) stretch locus. However, we point out some key differences (referring to \cite{DGK1} for details):
\begin{itemize}
\item In Step 1, there are no ``degenerate cases'': the Ascoli-Arzel\`a theorem always provides a minimizing vector field as a pointwise limit.
\item However, the minimizer tends to be in general discontinuous or multi-valued (a closed subset of $T\mathbb{H}^2$ containing possibly more than one vector above some points of $\mathbb{H}^2$). This comes from the fact that in the relationship \eqref{dprime} defining lipschitz vector fields, nothing prevents vectors at nearby points from pointing strongly towards one another. A ``natural'' minimizer is in general a convex-set-valued section of $T\mathbb{H}^2$, or \emph{convex field} $Y\subset T\mathbb{H}^2$, with $Y$ closed 
(\cite[\S 3]{DGK1}).
\item For similar reasons, outside the lift $\Omega\subset \mathbb{H}^2$ of the convex core, adding to $Y$ an arbitrarily strong component towards $\Omega$ would never increase the lipschitz constant of $Y$: therefore we technically must enforce some standard behavior for $Y$ outside $\Omega$ to ensure well-definedness \cite[\S 4.2]{DGK1}.
\item One can then develop the whole infinitesimal theory, including an analogue of Theorem \ref{kirszbraun}, for convex fields, and find that the $(u,j)(\Gamma)\cdot'$-action on $\mathfrak{g}$ is properly discontinuous if and only if $Y$ has negative lipschitz constant $c$.
\item It remains to regularize $Y$. To turn $Y$ into a continuous \emph{vector} field \cite[\S 5.4]{DGK1} (in fact, a Lipschitz section of the tangent bundle of $\mathbb{H}^2$) with only minor damage to the constant $c$, we write $Y=Y_0+Y_1$ where $Y_0$ is a smooth, $(u,j)$-equivariant vector field. Then the convex field $Y_1\subset T\mathbb{H}^2$ is $j$-invariant, which means $j(\gamma)_* Y_1=Y_1$ for all $\gamma\in\Gamma$. We then replace $Y_1$ with a (still $j$-invariant) vector field $Y_1^t$ obtained by flowing each vector of $Y_1$ for a small negative time $t<0$ under the geodesic flow of $\mathbb{H}^2$. The one-dimensional case helps understand why $Y_1^t$ is a vector field when $Y_1$ is a convex field: by \eqref{dprime}, a $c$-lipschitz convex field on $\mathbb{R}\simeq\mathbb{H}^1$ identifies with a curve in $T\mathbb{R}\simeq \mathbb{R}\oplus\mathbb{R}$ such that the segment between any two of its points is either vertical, or has slope $\leq c$. The backwards flow amounts to applying a linear transformation $(^1_0 {}^t_1)$ to that curve, turning it into the graph of a Lipschitz \emph{function} on $\mathbb{R}$ with variation rates in $[\frac{1}{t},\frac{c}{1+ct}]$ (recall $t$ is a small negative number). 
\item Finally, a convolution procedure \cite[\S 5.5]{DGK1} allows us turn the continuous vector field $Y_1^t$ into a smooth (still $j$-invariant) vector field $Z$, still keeping the lipschitz constant of $Y_0+Z$ negative.
\end{itemize} 

\subsection{Geometric limits} \label{limits}

An important upshot of this last smoothness property is that it lets us realize the intuitive picture of contracting vector fields as derivatives of contracting maps:

\begin{theorem} \cite[Th.\ 1.5]{DGK1} \label{integrate}
Suppose $(u, j):\Gamma\rightarrow \mathbf{G}'=\mathfrak{g} \rtimes G$ satisfies the hypotheses of Theorem \ref{mainmicro}, and the $\cdot'$-action of $(u, j)(\Gamma)$ on $\mathfrak{g}$ is properly discontinuous. Suppose $(\rho_t)_{t\geq 0}$ is a smooth family of representations $\rho_t:\Gamma\rightarrow G$ with $\rho_0=j$, and that $\rho_t$ departs $j$ in the direction $u$ (namely, 
$\rho_t(\gamma)=e^{tu(\gamma)+o(t)}j(\gamma)$
for all $\gamma\in \Gamma$). Suppose further that $\rho_t(\gamma)$ has shorter hyperbolic translation length than $j(\gamma)$, for at least some $\gamma\in \Gamma$ and $t>0$. Then there exist $c<0$, a $(u,j)$-equivariant smooth vector field $Y$ on $\mathbb{H}^2$, and a family of $(j,\rho_t)$-equivariant maps $f_t:\mathbb{H}^2\rightarrow \mathbb{H}^2$ such that $f_t$ has Lipschitz constant at most $1+ct+o(t)$, and $\left . \frac{\mathrm{d}}{\mathrm{d}t}\right |_{t=0}f_t(p)=Y(p)$ for all $p\in \mathbb{H}^2$.
\end{theorem}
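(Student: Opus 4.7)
The plan is to obtain the smooth vector field $Y$ from Theorem \ref{mainmicro}, use it to build a first-order candidate $\tilde f_t$ by pointwise exponentiation of $tY$, and then correct $\tilde f_t$ into an exactly $(j,\rho_t)$-equivariant family $f_t$ without sacrificing either the derivative-at-zero condition or the contracting Lipschitz bound.

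First, apply Theorem \ref{mainmicro} to $(u,j)$ to obtain a smooth, $(u,j)$-equivariant vector field $Y$ on $\mathbb{H}^2$ that is $c$-lipschitz for some $c<0$. The sign ambiguity (``up to exchanging $u$ with $-u$'') in Theorem \ref{mainmicro} is resolved by the hypothesis on $\rho_t$: were the contracting field instead associated to $-u$, the same first-order analysis performed below (with the sign of $Y$ reversed) would show that \emph{every} $\gamma\in\Gamma$ has translation length of $\rho_t(\gamma)$ strictly greater than that of $j(\gamma)$ for small $t>0$, contradicting the hypothesis that some $\gamma$ and some $t>0$ do the opposite.

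Second, define the naive candidate $\tilde f_t(p):=\exp_p(tY(p))$, the pointwise exponentiation of $tY$ in the hyperbolic metric. By construction $\tilde f_0=\mathrm{Id}_{\mathbb{H}^2}$ and $\partial_t\tilde f_t|_{t=0}(p)=Y(p)$. The definition \eqref{dprime} of the $c$-lipschitz property, combined with convexity of the distance function in $\mathbb{H}^2$, gives
$$d\bigl(\tilde f_t(p),\tilde f_t(q)\bigr)\leq (1+ct+o(t))\,d(p,q),$$
with $o(t)$ uniform in $(p,q)$ thanks to the convex-cocompactness of $j$. A short Taylor expansion comparing $\tilde f_t(j(\gamma)\cdot p)$ with $\rho_t(\gamma)\cdot \tilde f_t(p)$, using the $(u,j)$-equivariance of $Y$ and the expansion $\rho_t(\gamma)=e^{tu(\gamma)+o(t)}j(\gamma)$, shows that the two sides agree up to an error of size $o(t)$ (in fact $O(t^2)$ on compacts) uniformly in $p$.

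The main obstacle is now to correct $\tilde f_t$ into an exactly $(j,\rho_t)$-equivariant smooth map while preserving both the contracting Lipschitz bound and the first-order derivative $Y$. The plan is to pick a compact fundamental domain $D$ for the $j(\Gamma)$-action on (a thickening of) the convex core, extend the restriction $\tilde f_t|_D$ equivariantly to $\mathbb{H}^2$ via $f_t^{\sharp}(j(\gamma)p):=\rho_t(\gamma)\tilde f_t(p)$ for $p\in D$, and then smooth the $o(t)$-size jumps across translates of $\partial D$ by convolving with a $j$-invariant smooth partition of unity, defining $f_t$ as the corresponding barycentric combination in $\mathbb{H}^2$ of the finitely many nearby values. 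Barycentric combinations in $\mathbb{H}^2$ are $1$-Lipschitz in each argument by $\mathrm{CAT}(0)$ convexity (as used in Step 2 of the proof of Theorem \ref{mainmacro}), so the mollification contributes at most $o(t)$ to the Lipschitz constant. The hardest point is the simultaneous control of the Lipschitz estimate and the derivative estimate: one must arrange the mollification scale so that it induces both an $o(t)$ loss in the Lipschitz constant \emph{and} an $o(1)$ convergence of the difference quotient $t^{-1}(f_t-\mathrm{Id})$ to $Y$; coupling these two estimates is where I expect the bulk of the technical work to lie.
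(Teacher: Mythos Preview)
The paper itself does not prove Theorem~\ref{integrate}: it is quoted from \cite{DGK1}, and the only hint offered is the sentence preceding the statement, namely that the \emph{smoothness} of the field $Y$ produced at the end of the sketch of Theorem~\ref{mainmicro} is what makes the integration possible. Your outline is consonant with that hint --- start from the smooth $c$-lipschitz $Y$, exponentiate pointwise, then repair equivariance --- but your second step contains a genuine error.

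You claim that the $c$-lipschitz inequality~\eqref{dprime} ``combined with convexity of the distance function in $\mathbb{H}^2$'' yields $d\bigl(\tilde f_t(p),\tilde f_t(q)\bigr)\leq (1+ct+o(t))\,d(p,q)$. This is backwards. Setting $g(s):=d\bigl(\exp_p(sY(p)),\exp_q(sY(q))\bigr)$, the $\mathrm{CAT}(0)$ property makes $g$ \emph{convex}, hence $g(t)\geq g(0)+t\,g'(0)$: convexity delivers a \emph{lower} bound, the opposite of what you need. The upper bound really does require the smoothness of $Y$ that the paper singles out. Since $Y$ is $(u,j)$-equivariant and $j$ is convex cocompact, $Y$ has uniformly bounded $C^1$ norm; one then controls the differential $D\tilde f_t$ (via Jacobi fields along $s\mapsto\exp_p(sY(p))$, using that the symmetric part of $\nabla Y$ is bounded above by $c$) to obtain $\Vert D\tilde f_t\Vert\leq 1+ct+O(t^2)$ uniformly in $p$, and integrates along geodesics. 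Without this argument your $o(t)$ has no reason to be uniform, nor to scale like $o(t)\,d(p,q)$ as $q\to p$, so no Lipschitz bound follows from what you wrote.

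Your third step (barycentric averaging against a $j$-invariant partition of unity to restore exact $(j,\rho_t)$-equivariance, with an $o(t)$ loss in the Lipschitz constant coming from bounded weight gradients times $o(t)$-sized jumps) is a reasonable plan and the bookkeeping you indicate is correct in outline; as you yourself say, carrying it out carefully is where the remaining work lies.
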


Theorem \ref{integrate} means that we can make the geodesic fibrations $\{_{f_t(p)}K_p\}_{p\in\mathbb{H}^2}$ of $G$ (given by Proposition \ref{Lip}) converge to the geodesic fibration $\{_{Y(p)}\mathfrak{k}_p\}_{p\in\mathbb{H}^2}$ of $\mathfrak{g}$ (given by Proposition \ref{lip}) as $t\rightarrow 0$, by scaling up $G$ near the identity at rate $1/t$ (to find $\mathfrak{g}$ in the limit).

We can interpret this as saying that the operation of passing to a rescaled limit (from $G$ to $\mathfrak{g}$, as in Figure \ref{AdSMink}) ``commutes'' with the operation of passing to a quotient (by $(\rho_t, j)(\Gamma)$ and, in the limit, $(u, j)(\Gamma)$). A more precise formulation can be given by finding \emph{sections} of the above geodesic fibrations: by definition, a section of the $f_t$-fibration $\pi_t:G\rightarrow \mathbb{H}^2$ of $G$  from Proposition \ref{Lip} is just a procedure taking each $p\in\mathbb{H}^2$ to an isometry $g^t_p\in G$ of $\mathbb{H}^2$ that sends $p$ to $f_t(p)$. Such a section is equivariant if $$g^t_{j(\gamma)\cdot p}=\rho(\gamma)g^t_pj(\gamma)^{-1} \hspace{10pt} \text{ for all } \hspace{10pt} p\in \mathbb{H}^2~.$$ Similarly, a section of the $Y$-fibration $\pi':\mathfrak{g}\rightarrow \mathbb{H}^2$ of $\mathfrak{g}$ from Proposition \ref{lip} is just a procedure taking each $p\in\mathbb{H}^2$ to a Killing field $X_p$ on $\mathbb{H}^2$ such that $Y(p)=X_p(p)$. Such a section is equivariant if $$X_{j(\gamma)\cdot p}=u(\gamma)+\mathrm{Ad}(j(\gamma))(X_p) \hspace{10pt} \text{ for all } \hspace{10pt} p\in \mathbb{H}^2~.$$ We say that the sections $(g^t_p)_{p\in\mathbb{H}^2}$ of $\pi_t$ converge to the section $(X_p)_{p\in\mathbb{H}^2}$ of $\pi'$ if $$\left . \frac{\mathrm{d}}{\mathrm{d}t}\right |_{t=0} g^t_p=X_p \hspace{10pt} \text{ for all } \hspace{10pt} p\in\mathbb{H}^2~.$$ Using Theorem \ref{integrate}, it is not hard to construct smooth sections with these properties, by choosing for instance isometries $g^t_p\in G$ osculating the smooth maps $f_t$ near $p$.

\begin{theorem} \cite[Cor.\ 1.5]{DGK1} \label{transition}
Under the hypotheses of Theorem \ref{integrate}, let $S$ denote the hyperbolic surface $j(\Gamma)\backslash \mathbb{H}^2$, and $\mathbb{S}^1$ denote the circle $\mathbb{R}/2\pi\mathbb{Z}$. The representations $(\rho_t, j)$ define complete Lorentzian structures $\omega_t$ on $M:=S\times \mathbb{S}^1$ modelled on $G$, and $(u, j)$ defines a complete Lorentzian structure $\omega$ on $M':=S\times (\mathbb{S}^1\smallsetminus \{[\pi]\})$ modelled on $\mathfrak{g}$, such that the scaled restrictions $t^{-2}\omega_t|_{M'}$ converge to $\omega$ as $t\rightarrow 0$.
\end{theorem}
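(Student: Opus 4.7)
The plan is to exhibit both $M_t = (\rho_t,j)(\Gamma)\backslash G$ and $M' = (u,j)(\Gamma)\backslash \mathfrak{g}$ as trivialized bundles over $S = j(\Gamma)\backslash \mathbb{H}^2$, build a family of embeddings $\Phi_t : M' \hookrightarrow M_t$ that realize $M' \hookrightarrow M$ topologically and become the ``rescaling by $1/t$'' of $G$ to $\mathfrak{g}$ in suitable charts, and then invoke the infinitesimal principle (last bullet of Section \ref{basics}) that the Killing form on $\mathfrak{g}$ is the rescaled limit of $\delta^2$ near the identity of $G$.

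First, I would apply Propositions \ref{Equiv} and \ref{equiv} to the data $(f_t, Y)$ provided by Theorem \ref{integrate}. These give equivariant line fibrations $\pi_t : G \to \mathbb{H}^2$ with timelike circle fibers $_{f_t(p)}K_p$, and $\pi' : \mathfrak{g} \to \mathbb{H}^2$ with timelike line fibers $_{Y(p)}\mathfrak{k}_p$, which descend to fibrations $M_t \to S$ and $M' \to S$. The smooth equivariant sections $p \mapsto g^t_p$ and $p \mapsto X_p$ constructed just above the theorem trivialize these bundles, yielding $M_t \cong S \times \mathbb{S}^1$ (with fiber $K \cong \mathbb{R}/2\pi\mathbb{Z}$ in the rotation-angle parametrization) and $M' \cong S \times \mathbb{R}$ (with fiber $\mathbb{R}\cdot R_p$, where $R_p \in \mathfrak{g}$ is a unit rotation Killing field at $p$). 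Identifying $\mathbb{R}$ with $\mathbb{S}^1 \setminus \{[\pi]\}$ by a fixed diffeomorphism yields the topological form required.

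Second, I would define $\Phi_t : M' \to M_t$ fiberwise by
\begin{equation*}
(p, v) \longmapsto \bigl[\, g^t_p \cdot \exp_G(t\, v\, R_p) \,\bigr],
\end{equation*}
well-defined on the quotients thanks to the equivariance of $g^t_p$, $X_p$, and $R_p$ together with the compatibility of $\mathrm{Ad}$ on the two sides. Because $g^t_p = \mathrm{Id} + t X_p + o(t)$ by construction of the osculating section, the Baker--Campbell--Hausdorff formula gives $g^t_p \cdot \exp_G(tvR_p) = \exp_G\bigl(t(X_p + v R_p) + o(t)\bigr)$. Hence on any compact set $K \subset M'$ and all sufficiently small $t > 0$, $\Phi_t|_K$ is an embedding into $M_t \setminus (S \times \{[\pi]\})$ that, read in the exponential chart, is the linear rescaling $\mathfrak{g} \to \mathfrak{g}$, $Z \mapsto tZ$ up to $o(t)$.

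For metric convergence: the bi-invariant Killing metric $\omega_G$ on $G$ satisfies $\exp_G^*\omega_G = B + O(\|Z\|^2)$ near $0 \in \mathfrak{g}$, where $B$ is the Killing form defining $\omega$ on $\mathfrak{g}$. Consequently $t^{-2}(\exp_G \circ t\,\cdot)^*\omega_G \to B$ uniformly on compact subsets of $\mathfrak{g}$, and combining with the approximation above yields $t^{-2} \Phi_t^*\omega_t \to \omega$ on $M'$. The main obstacle will be upgrading this to convergence uniform on compact sets of $M'$: the $o(t)$ corrections coming from $g^t_p - \mathrm{Id} - tX_p$ and from Baker--Campbell--Hausdorff involve derivatives of $X_p$ in $p$ and of $g^t_p$ in $t$, which must be controlled uniformly on a fundamental domain for $j(\Gamma)$ in the convex core lift $\Omega \subset \mathbb{H}^2$. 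The convex-cocompactness of $j$ (inherited from Theorem \ref{mainmicro}) ensures this fundamental domain is genuinely compact, and Theorem \ref{integrate} furnishes the required smoothness of $(f_t, g^t_p)_{t \geq 0}$; the factor $t^{-2}$ precisely cancels the $t^2$-Jacobian of the rescaling $\exp_G \circ (t\,\cdot)$, producing a finite non-degenerate limit.
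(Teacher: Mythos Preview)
Your approach is essentially the one taken in the paper: exploit the equivariant timelike fibrations of Propositions~\ref{Lip} and~\ref{lip}, trivialize using the osculating sections $g^t_p \to X_p$, and read off metric convergence from the fact that the bi-invariant metric on $G$ pulls back under $\exp_G$ to the Killing form plus quadratic corrections. Your use of $\exp_G(tvR_p)$ and Baker--Campbell--Hausdorff is equivalent to the paper's formulation in terms of post-composing $g^t_p$ with a rotation around $f_t(p)$, since $g^t_p$ conjugates rotations at $p$ to rotations at $f_t(p)$.

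There is one point to clean up. You introduce \emph{two} trivializations of $M_t$: first ``in the rotation-angle parametrization'' $(p,\theta)\mapsto g^t_p \cdot r^p_\theta$, which defines a global $\omega_t$ on $M=S\times\mathbb{S}^1$; and second the map $\Phi_t:(p,v)\mapsto g^t_p\cdot\exp_G(tvR_p)$, which is what your convergence argument actually uses. These are not the same map (they differ by a fiberwise $t$-dependent rescaling $v\mapsto tv$), so the quantity $t^{-2}\Phi_t^*(\text{metric on }M_t)$ that you show converges is not literally $t^{-2}\omega_t|_{M'}$ for the $\omega_t$ you first defined. Moreover $\Phi_t$ is only an embedding on compact subsets of $M'$, so it cannot by itself define $\omega_t$ on all of $M$ as the statement requires. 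The paper resolves both issues at once by replacing the raw rotation angle $\theta$ with $\psi_t(\theta)$, where $\psi_t$ is a self-diffeomorphism of the whole circle $\mathbb{S}^1$ fixing $[0]$ and having derivative $t$ there: this makes the trivialization global (so $\omega_t$ lives on all of $M$) while agreeing to first order with your $v\mapsto tv$ near the zero-section, which is exactly what the convergence argument needs. With that one adjustment your proof goes through and matches the paper's.
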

By definition, a Lorentzian structure on a $3$-manifold $M$ is a smooth symmetric bilinear form on the tangent bundle $TM$ with signature $(-,+,+)$. We say that such a structure is \emph{modelled} on $G$ (which is itself a Lorentzian $3$-manifold) if it can be obtained by patching open sets of $G$ together via isometries of $G$. The structure is \emph{complete} if in addition it makes $M$ a quotient of the universal cover of $G$. (Similar definitions can be made with $\mathfrak{g}$ instead of $G$.) Lorentzian structures converge if the quadratic forms in each tangent space converge, uniformly on compact sets.

\begin{proof}
We sketch a proof of Theorem \ref{transition} (details can be found in \cite[\S 7]{DGK1}). For small $t>0$, place a Lorentzian form $\omega_t$ on $M:=S\times \mathbb{S}^1$, as follows. The maps $f_t$ produced by Theorem \ref{integrate} induce fibrations $\{_{f_t(p)}K_p\}_{p\in\mathbb{H}^2}$ of $G$ (Proposition \ref{Lip}) of which we consider equivariant, converging sections $(g^t_p)_{p\in\mathbb{H}^2}$ as explained before the statement of the theorem. These sections can be used to build \emph{diffeomorphisms} $$\Phi_t:M\rightarrow (\rho,j)(\Gamma)\backslash G~:$$ namely, the corresponding map $\widetilde{\Phi}_t$ between universal covers is defined so that $\widetilde{\Phi}_t(p,0)$ is the isometry $g^t_p$ of $\mathbb{H}^2$ (taking $p$ to $f_t(p)$), and $\widetilde{\Phi}_t(p,\theta)$ is the isometry $g^t_p$ followed by a rotation of angle $\psi_t(\theta)$ around $f_t( p)$, where $\psi_t$ is an appropriate self-diffeomorphism of $\mathbb{S}^1$ fixing $[0]$, \emph{of derivative $t$ at $[0]$}. Thus, $\widetilde{\Phi}_t(\{p\}\times \mathbb{S}^1)$ is the whole fiber ${}_{f_t(p)}K_p$ of isometries taking $p$ to $f_t(p)$, for each $p\in\mathbb{H}^2$. This construction descends to a quotient $\Phi_t$: by pushforward, $\Phi_t$ endows $(\rho,j)(\Gamma)\backslash G$ with a product structure compatible with the $f_t$-fibration into geodesics; by pullback, $\Phi_t$ endows the fixed manifold $M$ with a complete Lorentzian structure $\omega_t$.

A similar diffeomorphism $\Phi:M'\simeq S\times \mathbb{R}\rightarrow (u,j)(\Gamma)\backslash \mathfrak{g}$ can be built at the infinitesimal level, by defining the universal cover map $\widetilde{\Phi}$ to take each $(p,0)\in\mathbb{H}^2\times \mathbb{R}$ to the Killing field $X_p:=\left . \frac{\mathrm{d}}{\mathrm{d}t}\right |_{t=0}g^t_p \in \mathfrak{g}$, and $(p,\theta)$ to the sum of $X_p$ and an infinitesimal rotation of angular velocity $\theta$ around $p$. By pushforward, again $\Phi$ endows $(u,j)(\Gamma)\backslash \mathfrak{g}$ with a product structure compatible with the $Y$-fibration into geodesics (where $Y$ is the vector field given by Theorem \ref{integrate}); by pullback, $\Phi$ endows the fixed manifold $S\times \mathbb{R}\simeq M'$ with a complete Lorentzian structure $\omega$. The convergence statements of Theorem \ref{integrate} (and of sections) can be interpreted as saying that $t^{-2}\omega_t$ converges to $\omega$ (the exponent $-2$ appears because $\omega_t, \omega$ are $2$-forms). The reason why the second factor $\mathbb{S}^1$ becomes $\mathbb{R}\simeq \mathbb{S}^1\smallsetminus \{-1\}$ in the limit is that rescaling by $t^{-1}$ makes the fibers ${}_{f_t(p)}K_p$ appear longer and longer, with the limit being just a line.
\end{proof}

\subsection{A classification result} \label{classification}
In \cite{DGK2}, we prove a classification result for pairs $(u,j)$ that act properly dicontinuously on $\mathfrak{g}$, with $j$ convex-cocompact. The starting point is the following construction (due to Thurston \cite{stretchmap}; see also \cite{pt10}) to produce, from $j$, a representation $\rho$ of the free group $\Gamma$ such that 
there exist $(j,\rho)$-equivariant maps from $\mathbb{H}^2$ to itself with Lipschitz constant $<1$.
\begin{itemize}
\item Subdivide the hyperbolic surface $S=j(\Gamma)\backslash \mathbb{H}^2$ into disks, using disjoint, isotopically distinct, embedded geodesics $\alpha_1, \dots, \alpha_r$ with endpoints out in the funnels of $S$.
\item Near each $\alpha_i$, choose a second geodesic $\alpha'_i$ disjoint from $\alpha_i$. Let $(p_i, p'_i)\in \alpha_i\times \alpha'_i$ be the closest pair of points in $\alpha_i\times \alpha'_i$.
\item Delete from $S$ the $r$ strips bounded by $\alpha_i\cup \alpha'_i$ (which are assumed disjoint), and glue back $\alpha_i$ to $\alpha'_i$ by identifying $p_i$ with $p'_i$. The representation $\rho$ is the holonomy of the new hyperbolic metric, defined up to conjugacy. 
\end{itemize}

Foliating the strip between $\alpha_i$ and $\alpha'_i$ by geodesics perpendicular to $[p_i, p'_i]$, all collapsed to a single line in the new metric, we see that there exists a $1$-Lipschitz, $(j,\rho)$-equivariant map $f$. This constant $1$ can be slightly improved by an appropriate ``relaxation'' procedure. We say that $\rho$ is a \emph{strip deformation} of $j$.

\begin{definition}
If $(u,j):\Gamma\rightarrow \mathfrak{g}\rtimes G = \mathbf{G}'$ is a representation, with $j$ is Fuchsian, such that there exist $(j,u)$-equivariant vector fields on $\mathbb{H}^2$ with negative lipschitz constant, then we call $u$ an \emph{admissible} cocycle.
\end{definition}
The infinitesimal version of the above construction, obtained as $d(p_i, p'_i)$ goes to~$0$, produces an admissible $j$-cocycle $u:\Gamma\rightarrow \mathfrak{g}$. By definition, the \emph{support} of an infinitesimal strip deformation is the union of the arcs~$\alpha_i$.
\begin{theorem} \cite[Th.\ 1.4]{DGK2} \label{triangulation}
If $j$ is convex cocompact, then all admissible cocycles $u$ are realized as infinitesimal strip deformations. Moreover, the realization is unique if we request that $\alpha_i$ exit the convex core $\Omega$ of $S$ perpendicularly at both ends and that $p_i$ be the midpoint of $\Omega\cap \alpha_i$.
\end{theorem}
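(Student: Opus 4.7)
The plan is to define an \emph{infinitesimal strip deformation map} $\Phi$ from an open cone over the arc complex $\mathcal{A}(\Omega)$ of the convex core $\Omega$ into $Z^1(\Gamma,\mathfrak{g})$, and to show that, under the normalization of the theorem, $\Phi$ is a homeomorphism onto the admissible cone $\mathrm{Adm}(j)$ of cocycles characterized by Theorem \ref{mainmicro}.

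First, given a collection of disjoint normalized arcs $(\alpha_1,\ldots,\alpha_r)$ in $\Omega$, with midpoints $p_i$ and widths $w_i>0$, I would write the cocycle $u := \Phi(\alpha_\bullet,w_\bullet)$ explicitly: for each $\gamma\in\Gamma$, let $u(\gamma)\in\mathfrak{g}$ be the signed sum of the infinitesimal Killing fields generated by pushing apart the two sides of each lift of $\alpha_i$ crossed by a path from the basepoint to $j(\gamma)\cdot\text{basepoint}$ in $\mathbb{H}^2$, each Killing field centered at the lifted midpoint. The perpendicular-midpoint normalization makes each summand canonical and the sum path-independent, so the cocycle condition is automatic; admissibility comes from the explicit contracting vector field obtained as the derivative of the $1$-Lipschitz strip-collapse map recalled just before the theorem statement, combined with the small relaxation that makes the lipschitz constant strictly negative.

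Second, I would assemble these simplex-by-simplex formulas into a continuous map on the open cone $\mathcal{C}(\mathcal{A}(\Omega))$, using the observation that letting a width $w_i\to 0$ corresponds to deleting the arc $\alpha_i$, which matches the face relations of $\mathcal{A}(\Omega)$. An Euler-characteristic computation on the compact surface-with-geodesic-boundary $\Omega$ (compactness provided by the convex-cocompact hypothesis) confirms that top simplices of $\mathcal{A}(\Omega)$ — ideal triangulations of $\Omega$ by arcs running between boundary components — have exactly the right number of arcs to match the cocycle-space dimension.

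Third, I would prove $\Phi$ is a homeomorphism onto $\mathrm{Adm}(j)$ in three sub-steps. \emph{Local injectivity on each top simplex}: the strip Killing fields associated to the arcs of an ideal triangulation are linearly independent in $Z^1(\Gamma,\mathfrak{g})$, as detected by evaluating on holonomies of loops dual to the arcs. \emph{Properness into $\mathrm{Adm}(j)$}: whenever a sequence of parameters leaves every compact subset of $\mathcal{C}(\mathcal{A}(\Omega))$ — widths blowing up, arcs colliding, or geometry degenerating at $\partial\Omega$ — the best achievable lipschitz constant for an equivariant vector field associated with the resulting cocycle tends to $0$, so $u$ escapes $\mathrm{Adm}(j)$. \emph{Surjectivity}: invariance of domain then shows the image of $\Phi$ is both open (by local injectivity) and closed (by properness) in the connected cone $\mathrm{Adm}(j)$, so it equals $\mathrm{Adm}(j)$. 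The uniqueness statement in the theorem is the resulting global injectivity, obtained by combining the local injectivity with the fact that strip cocycles supported on distinct top simplices of $\mathcal{A}(\Omega)$ have disjoint images.

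The main obstacle will be the properness step. Producing a quantitative link between the strip geometry (widths, arc lengths, distances between arcs) and the infimum lipschitz constant $c_0$ of equivariant vector fields — in a way sharp enough to force $c_0 \to 0$ under all degenerations — is technically delicate, and is precisely where convex-cocompactness of $j$ intervenes: the compact convex core affords the uniform geometric control (bounded diameter, positive systole, finite arc combinatorics) that calibrates these estimates and rules out pathological escapes to infinity inside $\mathrm{Adm}(j)$.
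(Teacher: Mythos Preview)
Your overall architecture---build a strip-deformation map from a cone over the arc complex into cocycle space, then show it is a proper local homeomorphism onto the admissible cone---is exactly the paper's strategy. But the argument has a real gap at the step you label ``local injectivity''. You only check injectivity in the \emph{interior} of each top-dimensional simplex (linear independence of strip fields for a fixed triangulation), and you never analyze the map near lower-dimensional faces, where several top simplices meet. Invariance of domain requires the source to be a manifold and the map to be locally injective \emph{everywhere}; the cone over the full arc complex is not a manifold at non-filling arc systems, and even once one restricts (as the paper does) to the open subset $\mathcal{X}$ of filling arc systems---which Penner's theorem identifies with an open ball---one must still prove that $\Phi$ is a local homeomorphism at every stratum of $\mathcal{X}$. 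This is where the real work lies: the paper handles codimensions $0,1,2$ by direct Lorentzian inequalities and codimensions $\geq 3$ by an inductive link argument (a piecewise-projective self-map of $S^{d-1}$ that is a local homeomorphism must be a global one when $d\geq 3$). Your proposal contains no substitute for this analysis.

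Your attempt to recover global injectivity from ``strip cocycles supported on distinct top simplices have disjoint images'' is not a rescue: as stated it is false (adjacent top simplices share a common face, hence share its image), and if interpreted as ``interiors of images are disjoint'' it is precisely the unproved local injectivity across faces. Minor points: you should projectivize (as the paper does) to make the properness argument clean, since widths blowing up does not push the lipschitz constant toward~$0$ but merely rescales the cocycle; and you need, somewhere, the fact that the admissible cone is open and convex (hence its projectivization is a ball of the correct dimension), which the paper obtains from Theorem~\ref{mainmicro} by linear interpolation of contracting fields.
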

\begin{proof}
To sketch the main ideas of the proof (see \cite{DGK2} for details), we recall the \emph{arc complex} $\overline{\mathcal{X}}$ of $S$: this is the abstract simplicial complex with one vertex for each embedded line in $S$ crossing $\partial\Omega$ perpendicularly (such as the $\alpha_i$), and one $k$-dimensional cell for each system of $k+1$ disjoint such lines. The combinatorial complex $\overline{\mathcal{X}}$ does not depend on the hyperbolic metric on $S$. Next, define $\mathcal{X}\subset\overline{\mathcal{X}}$ as the complement in $\overline{\mathcal{X}}$ of the union of all simplices corresponding to systems of arcs that \emph{fail} to decompose $S$ into topological disks. It is a theorem of Penner \cite{penner} that $\mathcal{X}$ is locally finite, and homeomorphic to an open ball of dimension one less than the Teichm\"uller space of~$S$. The construction of strip deformations gives a natural map $$f : \mathcal{X}\rightarrow \mathbb{P}(\mathsf{adm})$$ where $\mathsf{adm}$ denotes the space of admissible cocycles. Namely, if $x=(t_i, \alpha_i)_{1\leq i \leq r}$ is a point of a simplex of $\mathcal{X}$, with $t_i>0$ and $\sum_i t_i =1$, then the associated (projectivized) cocycle $f(x)$ is the one corresponding to infinitesimal strip deformation along the $\alpha_i$, with the weight $t_i$ being the infinitesimal width, at its midpoint $p_i$, of the collapsing strip supported by $\alpha_i$. It is enough to prove that $f$ is a homeomorphism. 
We do this in several steps:

First, admissible cocycles form a convex cone in the tangent space to the space of representations: this can be seen from Theorem \ref{mainmicro} by interpolating linearly between contracting vector fields (if the $Y_i$ are $(j,u_i)$-equivariant, $c_i$-lipschitz vector fields, then $\sum \lambda_i Y_i$ is $(j,\sum \lambda_i u_i)$-equivariant and $(\sum \lambda_i c_i)$-lipschitz). The same line of argument shows that this cone is open, of full dimension, in the space of infinitesimal deformations of~$j$. (Under a different, but equivalent definition of admissibility, this important result was first proved in \cite{GLM}.) Therefore, the range of $f$ is an open ball, whose dimension turns out to be the same as the domain. It is then enough to prove that $f$ is a covering: namely, a proper map that is locally a homeomorphism.

Second, $f$ is a proper map \cite[\S 3.1]{DGK2}. This means that if $(x_n)_{n\in\mathbb{N}}$ goes to infinity in $\mathcal{X}$, then the limit $[u]$ of the (projectivized) cocycles $f(x_n)$ is not admissible. If the supports of the $x_n$ stabilize in the arc complex, say to a decomposition $\Delta$ of $S$ into disks, then $x_n\rightarrow\infty$ means that too many edges of $\Delta$ have weights going to~$0$, in the sense that $S$ contains a closed geodesic, representing some $\gamma\in \Gamma$, that intersects only edges of just that type. This means that, without loss of generality, $u(\gamma)=0$, a clear obstruction to finding $(j,u)$-equivariant, contracting vector fields on $\mathbb{H}^2$ (any such field would have to be periodic on the axis of $\gamma$). If the supports of the $x_n\in \mathcal{X}$ do not stabilize, then they consist of longer and longer arcs, limiting (in the Hausdorff sense) to some geodesic lamination $\Lambda$ not all of whose leaves exit~$S$. We can then apply a similar argument, replacing $\gamma$ with curves nearly carried by~$\Lambda$.

Third, $f$ is locally a homeomorphism. We prove this at each point $x\in \mathcal{X}$, depending on $d\geq 0$, the codimension in $\mathcal{X}$ of the smallest cell (or \emph{stratum}) of $\mathcal{X}$ that contains~$x$. If $d\in \{0,1,2\}$, this can be proved directly in terms of elementary inequalities in Lorentzian geometry \cite[\S 5]{DGK2}. For $d\geq 3$, we can use an inductive argument: the link of the map $f$ at the stratum of $x$ is a piecewise projective map from a (simplicial) $(d-1)$-sphere to a $(d-1)$-sphere, and is a local homeomorphism by induction, therefore a covering: but when $d\geq 3$ the $(d-1)$-sphere can only cover itself trivially. Hence, $f$ is a local homeomorphism in a neighborhood of the stratum of $x$.
\end{proof}

Theorem \ref{triangulation} has analogues where the arcs $\alpha_i$ do not necessarily exit the convex core at right angles, and where the points $p_i\in \alpha_i$ are arbitrary. It also has macroscopic analogues (with representations $\rho:\Gamma\rightarrow G$ instead of cocycles $u:\Gamma\rightarrow \mathfrak{g}$). Moreover, the construction of strip deformations produces naturally a (discontinuous, piecewise Killing) equivariant vector field with lipschitz constant~$0$. An analogue, or limit case, of Proposition \ref{lip} can be proposed for such vector fields, producing ``degenerate'' fibrations of $\mathfrak{g}$ into straight lines (some of which may intersect, but ``benignly''). These degenerate fibrations can be conveniently described in terms of \emph{crooked planes}, which are $\mathrm{PL}$ surfaces of $\mathfrak{g}$ introduced by Drumm in \cite{dru92} and studied since by several authors \cite{dg95, dg99, cg00, cdg10, cdg11, gol13, bcdg13, cdg13}. Thus, a corollary of Theorem \ref{triangulation} is

\begin{corollary} \cite[Th.\ 1.6]{DGK2}
If $\Gamma$ is a free group, $j:\Gamma\rightarrow G$ is convex cocompact and $u:\Gamma\rightarrow \mathfrak{g}$ an admissible cocycle, then the complete Lorentz manifold $(u,j)(\Gamma)\backslash \mathfrak{g}$ admits a fundamental domain bounded by crooked planes.
\end{corollary}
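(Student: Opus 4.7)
My strategy is to use Theorem \ref{triangulation} to realize $u$ explicitly as an infinitesimal strip deformation, and then to lift the combinatorial decomposition of the convex core directly into $\mathfrak{g}$ to produce a fundamental domain whose faces are crooked planes. First, I invoke Theorem \ref{triangulation} to write $u$ as a strip deformation supported on a canonical system of disjoint geodesic arcs $\alpha_1,\dots,\alpha_r$ in $S=j(\Gamma)\backslash\mathbb{H}^2$, each perpendicular to $\partial\Omega$, with weights $t_i>0$ concentrated at their midpoints $p_i\in\alpha_i$. Since $\Gamma$ is free and $j$ is convex cocompact, the arcs $\alpha_i$ decompose $\Omega$ into finitely many topological disks $D_1,\dots,D_s$, and we may extend them into the funnels to cut each funnel into ideal half-planes, so that the lifts to $\mathbb{H}^2$ tile $\mathbb{H}^2$ into $j(\Gamma)$-translates of lifted pieces.

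Next, I build the canonical piecewise-Killing, $(j,u)$-equivariant vector field $Y$ alluded to in the paragraph preceding the corollary. On each lifted disk $\widetilde{D}\subset\mathbb{H}^2$ I take $Y|_{\widetilde{D}}$ to be a single Killing field $X_{\widetilde{D}}\in\mathfrak{g}$, with the rule that as one crosses a lifted arc $\widetilde{\alpha}_i$ from one disk to the neighboring one, the Killing field jumps by the infinitesimal translation of magnitude $t_i$ along the geodesic perpendicular to $\widetilde{\alpha}_i$ at the lift of the midpoint $p_i$. Equivariance under $(j,u)$ is built in because the sum of these elementary jumps along any loop representing $\gamma\in\Gamma$ recovers $u(\gamma)$ by definition of the strip-deformation cocycle. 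Now apply the degenerate analogue of Proposition~\ref{lip} mentioned in the text: over each open disk $\widetilde{D}$, the family of lines $\{{}_{Y(p)}\mathfrak{k}_p\}_{p\in\widetilde{D}}$ sweeps out an open region $U_{\widetilde{D}}\subset\mathfrak{g}$, and these regions form an essentially disjoint decomposition of $\mathfrak{g}$, their closures meeting along $2$-dimensional transition loci sitting over the arcs $\widetilde{\alpha}_i$.

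The heart of the proof is now to identify each transition locus as a crooked plane. Across a single arc $\widetilde{\alpha}_i$, the two adjacent Killing fields $X,X'$ differ by the infinitesimal translation along the geodesic perpendicular to $\widetilde{\alpha}_i$ at the midpoint; the transition locus is therefore the set of $Z\in\mathfrak{g}$ for which the zero of $Y-Z$ lies on the geodesic extending $\widetilde{\alpha}_i$. One checks that, in Minkowski coordinates on $\mathfrak{g}$ adapted to the endpoints of $\widetilde{\alpha}_i$ at $\partial_\infty\mathbb{H}^2$, this locus splits into a timelike stem over the segment of $\widetilde{\alpha}_i$ inside the convex core lift, together with two lightlike wings over the two ends, meeting along the lightlike lines tangent to $\partial_\infty G$ at those endpoints — which is exactly Drumm's definition of a crooked plane, compatible with the references \cite{dru92,dg95,cdg10}. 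The pieces $U_{\widetilde{D}}$ attached to the lifted disks of a single fundamental domain for $j$ in $\mathbb{H}^2$ then assemble into a fundamental domain for $(u,j)(\Gamma)\cdot'$ in $\mathfrak{g}$, bounded by finitely many crooked planes (the $(u,j)(\Gamma)$-translates of which tile $\mathfrak{g}$ by construction).

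The main obstacle I expect is verifying that the transition locus really is a crooked plane in the precise sense, and that the resulting crooked planes are pairwise disjoint — disjointness of crooked planes is a subtle condition worked out in \cite{dg99, cdg10, cdg11} and is what ultimately forces the perpendicularity and midpoint normalization in Theorem~\ref{triangulation} to play a role here. A secondary difficulty is the funnel region: one must choose the Killing extension of $Y$ into each funnel so that its ``transition plane'' at the extended arc is still a crooked plane with its spine on the axis extending $\alpha_i$; the flexibility of strip deformation (choosing the strip along $\alpha_i$ to extend beyond $\Omega$) allows this, and the disjointness of the arcs $\alpha_i$ in $S$, combined with the convex cocompactness of $j$, ensures that the resulting crooked planes in $\mathfrak{g}$ avoid each other.
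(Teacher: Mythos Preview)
Your approach is essentially the one the paper sketches in the paragraph immediately preceding the corollary: realize $u$ as an infinitesimal strip deformation via Theorem~\ref{triangulation}, build the associated piecewise-Killing $(j,u)$-equivariant field, run the degenerate analogue of Proposition~\ref{lip}, and identify the interface loci as crooked planes. The paper itself gives no further detail (deferring to \cite{DGK2}), so your elaboration is in line with it; one small caution is that the stem of the crooked plane is associated to the full geodesic line carrying $\widetilde{\alpha}_i$ rather than to the segment inside the convex core, and the perpendicularity/midpoint normalization in Theorem~\ref{triangulation} is there for \emph{uniqueness} of the strip realization, not for disjointness of the resulting crooked planes.
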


This had been conjectured in \cite{dg95}. In \cite[\S 8]{DGK2} we also give an analogue for quotients of $G$.

\end{document}